\definecolor{darkblue}{RGB}{0,0,160}
\newtheorem{thm}{Theorem}[section]
\newtheorem{cor}[thm]{Corollary}
\newtheorem{prop}[thm]{Proposition}
\theoremstyle{definition}
\newtheorem{defn}[thm]{Definition}
\theoremstyle{remark}
\newtheorem{example}[thm]{Example}
\newtheorem{remark}[thm]{Remark}
\newcommand\Perp{\protect\mathpalette{\protect\independenT}{\perp}}
\def\independenT#1#2{\mathrel{\rlap{$#1#2$}\mkern2mu{#1#2}}}
\newcommand{\ind}[2]{\left.#1 \Perp #2 \inD}
\newcommand{\inD}[1][\relax]{\def\argone{#1}\def\temprelax{\relax}
  \ifx\argone\temprelax\right.\else\,\middle|#1\right.{}\fi}
\begin{document}

\title{Algebraic Aspects of Conditional Independence and Graphical Models}

\author[T.~Kahle]{Thomas Kahle} \address {Fakultät für Mathematik,
  Otto-von-Guericke University Magdeburg, Germany}
\urladdr{\url{http://www.thomas-kahle.de}}

\author[J.~Rauh]{Johannes Rauh}
\address{Max-Planck-Institute for Mathematics in the Sciences,
Leipzig, Germany.}
 \urladdr{\url{http://www.yorku.ca/jarauh/}}

\author[S.~Sullivant]{Seth Sullivant}
\address{Department of Mathematics,
     North Carolina State University, Raleigh, USA.}
\urladdr{\url{http://www4.ncsu.edu/~smsulli2/}}

\subjclass[2010]{62-00, 13P25}

\begin{abstract}
This chapter of the forthcoming \emph{Handbook of Graphical Models}
contains an over\-view of basic theorems and techniques from algebraic
geometry and how they can be applied to the study of conditional
independence and graphical models.  It also introduces binomial ideals
and some ideas from real algebraic geometry.  When random variables
are discrete or Gaussian, tools from computational algebraic geometry
can be used to understand implications between conditional
independence statements.  This is accomplished by computing primary
decompositions of conditional independence ideals.  As examples the
chapter presents in detail the graphical model of a four cycle and the
intersection axiom, a certain implication of conditional independence
statements.  Another important problem in the area is to determine all
constraints on a graphical model, for example, equations determined by
trek separation.  The full set of equality constraints can be
determined by computing the model’s vanishing ideal.  The chapter
illustrates these techniques and ideas with examples from the
literature and provides references for further reading.
\end{abstract}

\date{May 2017}

\maketitle

\section{Introduction}

Consider a finite set of random variables~$X_v$, $v\in V$.
Section~1.6 of Part~I\footnote{All references to other sections refer
to the forthcoming \emph{Handbook of Graphical Models}, edited by
Mathias Drton, Steffen Lauritzen, Marloes Maathuis, and Martin
Wainwright.  It will contain this document as Section~3 of Part~I.}
describes how to use a simple undirected graph $G=(V, E)$ to encode
conditional independence (CI) statements among the random variables.
One can also naturally associate a parametrized family of joint
probability distributions of the~$X_v$ to a graph.  For undirected
graphs, the Hammersley--Clifford theorem (see Section~1.6.3 of Part~I
shows that both the implicit method and the parametric method lead to
the same families of probability distributions (called graphical
models), as long as all distributions are assumed strictly positive.

When probabilities are allowed to go to zero, the models defined by
the collections of CI statements contain probability distributions
that do not lie in the parametric graphical model, which, by
definition, consists of strictly positive probability distributions.
In fact, these additional distributions do not even lie in the closure
of the parametric graphical model, so they cannot be approximated by
distributions from the parametric graphical model.  Moreover, models
defined by the defined collections of CI statements (pairwise Markov
properties, local Markov properties, global Markov properties) differ
from one another.
As an example, consider the four-cycle~$C_4$.
\begin{prop}
\label{prop:binary-C4}
The binary random variables $X = (X_1,X_2,X_3,X_4)$ satisfy the global
Markov statements of~$C_4$, $\ind{1}{3}[\{2,4\}]$ and
$\ind{2}{4}[\{1,3\}]$, if and only if one (or more) of the following
statements is true:
\begin{enumerate}
\item The joint distribution lies in the closure of the graphical
  model.
\item There is a pair $(X_i,X_{i+1})$ of neighboring nodes such that
  $X_i=X_{i+1}$ a.s.
\item There is a pair $(X_i,X_{i+1})$ of neighboring nodes such that
  $X_i\neq X_{i+1}$ a.s.
\end{enumerate}
\end{prop}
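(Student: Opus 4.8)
The plan is to use the algebraic encoding of this chapter. A joint distribution of $(X_1,X_2,X_3,X_4)$ is a tensor $p=(p_{ijkl})_{i,j,k,l\in\{0,1\}}$ in the probability simplex $\Delta_{15}\subseteq\mathbb R^{16}$; the statement $\ind{1}{3}[\{2,4\}]$ is the vanishing of the $2\times2$ minors $p_{0j0l}p_{1j1l}-p_{0j1l}p_{1j0l}$ for all $(j,l)$, and $\ind{2}{4}[\{1,3\}]$ is the vanishing of $p_{i0k0}p_{i1k1}-p_{i0k1}p_{i1k0}$ for all $(i,k)$. These eight quadratic binomials generate the CI ideal $I_{C_4}\subseteq\mathbb R[p_{ijkl}]$, and a distribution satisfies both global Markov statements exactly when it lies in $V(I_{C_4})\cap\Delta_{15}$. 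I would then compute the primary decomposition of $I_{C_4}$---the main technique of the chapter---and match its components with (1)--(3). The expected outcome is that the minimal primes of $I_{C_4}$ are exactly the following nine: the toric ideal $P$ of the $C_4$ hierarchical model (facets $12,23,34,14$), whose nonnegative real points in $\Delta_{15}$ form the closure $\overline{\mathcal M}$ of the graphical model and thus account for~(1); for each of the four edges $e=\{a,b\}$, the monomial prime generated by the eight coordinates $p_{ijkl}$ whose $a$-th and $b$-th indices disagree, accounting for~(2); and, for each edge, the monomial prime generated by the eight coordinates whose $a$-th and $b$-th indices agree, accounting for~(3).

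Granting this, the ``if'' direction is a direct check requiring no decomposition. If $X_a=X_b$ almost surely for an edge $e=\{a,b\}$, then every $p_{ijkl}$ with disagreeing $a$-th and $b$-th index vanishes, and one sees that each of the eight generating minors contains such a coordinate as a factor of each of its two monomials, so all eight minors vanish; the case $X_a\neq X_b$ a.s.\ is symmetric. And $\overline{\mathcal M}$ satisfies both CI statements because $\mathcal M$ does by the Hammersley--Clifford theorem---both are among the global Markov statements of $C_4$---and vanishing of the minors is a Zariski-closed condition. In algebraic terms these are the inclusions $I_{C_4}\subseteq P$ and $I_{C_4}\subseteq M$ for each of the eight monomial primes $M$, so $V(I_{C_4})$ contains the union of the nine associated varieties.

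The substance is the reverse inclusion: $\sqrt{I_{C_4}}$ equals the intersection of the nine primes, equivalently $V(I_{C_4})$ has no further components. I would establish this by carrying out the primary decomposition in a computer algebra system, or by hand through the combinatorics of the support $S=\operatorname{supp}(p)\subseteq\{0,1\}^4$ of a distribution $p\in V(I_{C_4})$: the vanishing of the minors forces the fiber $\{(i,k):(i,j,k,l)\in S\}$ to be a combinatorial rectangle (a product $A\times B$) in the $(1,3)$-array for every $(j,l)$, and symmetrically the $(2,4)$-fibers to be rectangles. A finite inspection of these ``doubly rectangular'' patterns should show that either $S$ is contained in $\{x_a=x_b\}$ or in $\{x_a\neq x_b\}$ for some edge---which already gives (2) or (3), as the complementary coordinates of $p$ then vanish---or $S$ occurs as the support of some point of $\overline{\mathcal M}$, in which case one checks that $p\in V(P)$ and, using $p\geq0$, concludes $p\in\overline{\mathcal M}$.

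I expect the main obstacle to be precisely the completeness of the decomposition: ruling out a spurious component hidden inside the coordinate hyperplanes, equivalently showing that no ``mixed'' doubly rectangular support (neither edge-degenerate nor graphical) can occur, and that over each admissible coordinate torus $I_{C_4}$ and $P$ already cut out the same variety---a relative form of Hammersley--Clifford. The remaining care is in passing from the complex variety to $\Delta_{15}$: all nine components are defined by real, in fact rational, equations, so intersecting with the simplex neither creates nor loses cases, and the identification of the nonnegative part of $V(P)$ in $\Delta_{15}$ with $\overline{\mathcal M}$ rests on the standard fact that the closure of a log-linear model in the simplex is the set of nonnegative real zeros of its toric ideal. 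One should also note that the three conditions genuinely overlap---every point mass satisfies all of (1), (2) and (3)---which is harmless for an inclusive ``or''.
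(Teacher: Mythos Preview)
Your approach is essentially the one the paper takes: the paper proves Proposition~\ref{prop:binary-C4} by invoking the primary decomposition of the CI ideal of the binary four-cycle (cited as Theorem~5.6 of~\cite{Kahle2014b} in Section~\ref{sec:C4}), whose minimal primes are exactly the toric ideal of the graphical model together with the eight monomial primes $P_i=\langle p_x:x_i=x_{i+1}\rangle$ and $P'_i=\langle p_x:x_i\neq x_{i+1}\rangle$, and then reads off conditions (1)--(3) from these components. One notational point: in the paper the symbol $I_{C_4}$ is reserved for the toric (vanishing) ideal of the graphical model, while the CI ideal is written $I_{\operatorname{global}(C_4)}$; your write-up uses $I_{C_4}$ for the CI ideal, which would clash.
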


This chapter shows how to prove results such as
Proposition~\ref{prop:binary-C4} using algebraic tools.
The algebraic method can also be used to study implications between
conditional independence statements.  Here is an example:

\begin{prop}
\label{prop:example}
Suppose that $X,Y,Z$ are binary random variables or jointly normal
random variables.  If $\ind{X}{Y}$ and $\ind{X}{Y}[Z]$ then either
$\ind{X}{(Y,Z)}$ or $\ind{(X,Z)}{Y}$.
\end{prop}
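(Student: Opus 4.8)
I would treat the Gaussian and the discrete cases separately, since the algebraic encoding differs, although in both cases the task reduces to showing that a single affine variety is the union of two determinantal (rank-one) varieties. The Gaussian case can be done by hand. For jointly normal $X,Y,Z$ the hypotheses become vanishing subdeterminants of the covariance matrix $\Sigma=(\sigma_{ij})$: the statement $\ind{X}{Y}$ reads $\sigma_{XY}=0$, while $\ind{X}{Y}[Z]$ reads $\sigma_{XY}\sigma_{ZZ}-\sigma_{XZ}\sigma_{ZY}=0$. Substituting the first into the second gives $\sigma_{XZ}\sigma_{ZY}=0$, so $\sigma_{XZ}=0$ or $\sigma_{ZY}=0$. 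In the first case $\sigma_{XY}=\sigma_{XZ}=0$, which is exactly $\ind{X}{(Y,Z)}$; in the second case $\sigma_{XY}=\sigma_{ZY}=0$, which is $\ind{(X,Z)}{Y}$. (If $\sigma_{ZZ}=0$, so that $Z$ is almost surely constant, then $\sigma_{XZ}=\sigma_{ZY}=0$ automatically and both conclusions hold.)

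For the discrete case I would work in the polynomial ring $\mathbb{R}[p_{ijk}:i,j,k\in\{1,2\}]$ whose variables are the joint probabilities, abbreviating $p_{ij+}=p_{ij1}+p_{ij2}$. The two input statements generate the conditional independence ideal $I = \langle\, p_{11+}p_{22+}-p_{12+}p_{21+} \,\rangle + \langle\, p_{111}p_{221}-p_{121}p_{211},\, p_{112}p_{222}-p_{122}p_{212} \,\rangle$, which expresses that the $Z$-marginal table and each of the two $Z$-slices has rank at most one. The two output statements correspond to the determinantal ideals $I_{X\perp(Y,Z)}$, generated by the $2\times2$ minors of the $2\times4$ flattening of the table with rows indexed by $X$, and $I_{(X,Z)\perp Y}$, generated by the $2\times2$ minors of the $4\times2$ flattening with rows indexed by $(X,Z)$; both are prime, being ideals of maximal minors of a generic matrix. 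The plan is to compute a primary decomposition of $I$ (for instance in \texttt{Macaulay2}) and check that $\sqrt{I}=I_{X\perp(Y,Z)}\cap I_{(X,Z)\perp Y}$, equivalently $V(I)=V(I_{X\perp(Y,Z)})\cup V(I_{(X,Z)\perp Y})$, any remaining components in the decomposition being embedded and so not affecting the variety. Since all three ideals are homogeneous and the $2\times2$ minors of a flattening of a probability table vanish exactly when the corresponding CI statement holds, this variety equality is precisely the assertion of the proposition for binary distributions.

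The step I expect to be the main obstacle is confirming that no additional \emph{minimal} component enters the primary decomposition of $I$: that there is no family of distributions---in particular none supported on the boundary of the simplex, where some $p_{ijk}$ vanish---satisfying both hypotheses but neither conclusion. Conceptually this rests on the elementary fact that the sum of two $2\times2$ matrices, each of rank at most one, again has rank at most one if and only if one summand is zero, or the two share a column space, or they share a row space. Writing the $k$-th $Z$-slice $S_k$ as $a^{(k)}(b^{(k)})^{\top}$, the hypothesis that $S_1+S_2$ has rank at most one therefore forces $a^{(1)}\parallel a^{(2)}$, or $b^{(1)}\parallel b^{(2)}$, or one slice to vanish; the first and third alternatives give a factorization $p_{ijk}=r_i s_{jk}$, hence $\ind{X}{(Y,Z)}$, while the second gives $p_{ijk}=t_{ik}u_j$, hence $\ind{(X,Z)}{Y}$. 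Packaging this observation so that it both motivates and certifies the primary decomposition, and dispatching the boundary cases cleanly, is where the care is needed.
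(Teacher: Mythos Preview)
Your proposal is correct and matches the paper's approach: both the Gaussian case (reduce $J_{\mathcal C}$ to $\langle\sigma_{XZ}\sigma_{ZY},\sigma_{XY}\rangle$ and factor) and the binary case (compute the primary decomposition of $I_{\mathcal C}$ in \texttt{Macaulay2} and identify the two components as $I_{\ind{X}{(Y,Z)}}$ and $I_{\ind{(X,Z)}{Y}}$) are exactly what the paper does in Examples~3.10 and~3.13. Your additional linear-algebra argument---that two rank-one $2\times 2$ slices summing to a rank-one matrix must share a row or column space---is a genuine bonus the paper does not supply; it gives a self-contained proof of the binary case without appeal to a computer, and in fact shows directly that $I_{\mathcal C}$ has no extraneous minimal primes (the paper's computation confirms there are none, and indeed no embedded components either: $I_{\mathcal C}$ equals the intersection on the nose, not merely up to radical).
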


The CI implication in Proposition~\ref{prop:example} is a special case
of the \emph{gaussoid axiom}~\cite{Lnenicka2007}.  One may wonder what
is special about jointly normal or binary random variables.  For
instance, is there a variant of this implication when $X,Y,Z$ are
discrete but not binary?  How can one systematically find and study
implications like this?

CI implications can also be interpreted as intersections of graphical
models.  For example, the two CI statements $\ind{X}{Y}$ and
$\ind{X}{Y}[Z]$ in Proposition~\ref{prop:example} correspond to the
two graphical models $X \to Z \leftarrow Y$ and $X{-Z}{-Y}$,
respectively.  Thus, Proposition~\ref{prop:example} says that the
intersection of these two graphical models equals the union of the two
graphical models $X\ Z{-Y}$ and $X{-Z}\ Y$, provided the random
variables are either binary or jointly normal.  As the example shows,
the intersection of two graphical models need not be a graphical
model.  How can one compute this intersection?

The goal of this chapter is to explore these questions and introduce
tools from computational algebra for studying them.  Our perspective
is that, for a fixed type of random variable, the set of distributions
that satisfy a collection of independence constraints is the zero set
of a collection of polynomial equations.  Solutions of systems of
polynomial equations are the objects of study of algebraic geometry,
and so tools from algebra can be brought to bear on the problem.  The
next section contains an overview of basic ideas in algebraic geometry
which are useful for the study of conditional independence structures
and graphical models.  In particular, it introduces algebraic
varieties, polynomial ideals, and primary decomposition.
Section~\ref{sec:ciideals} introduces the ideals associated to
families of conditional independence statements, and explains how to
apply the basic techniques to deduce conditional independence
implications.  Section~\ref{sec:examples} illustrates the main ideas
with some deeper examples coming from the literature.
Section~\ref{sec:vanishing} concerns the vanishing ideal of a
graphical model, which is a complete set of implicit restrictions for
that model.  This set of restrictions is usually much larger than the
set of conditional independence constraints that come from the graph,
but it can illuminate the structure of the model especially with more
complex families of models involving mixed graphs or hidden random
variables.  Section~\ref{sec:further} highlights some key references
in this area.

%%%%%%%%%%%%%%%%%%%%%%%%%%%%%%%%%%%%%%%%%%%%%%%%%%%%%
%%%%%%%%%%%%%%%%%%%%%%%%%%%%%%%%%%%%%%%%%%%%%%%%%%%%%
%%%%%%%%%%%%%%%%%%%%%%%%%%%%%%%%%%%%%%%%%%%%%%%%%%%%%

\section{Notions of Algebraic Geometry and Commutative Algebra}\label{sec:alggeom}

Commutative algebra is the study of systems of polynomial
equations, and algebraic geometry is the study of geometric properties of their solutions.
Both are rich fields with many deep results.  This section only gives
a very coarse introduction to the basic facts that hopefully makes it
possible for the reader to understand the phenomena and algorithms
discussed in later parts of this chapter.
For a more detailed introduction, the reader is referred to the
standard textbook~\cite{Cox2015}.

\subsection{Polynomials, ideals and varieties}
\label{sec:polyn-ideals-vari}

Let $\Bbbk$ be a field, for example the rational numbers $\mathbb{Q}$,
the real numbers~$\mathbb{R}$, or the complex numbers~$\mathbb{C}$.
Let $\mathbb{N} = \{0,1, \ldots\}$ denote the natural numbers.
Let $p_1, p_2, \dots, p_r$ be a collection of \emph{indeterminates} or
\emph{variables}.  A \emph{monomial} in the indeterminates
$p_1, p_2, \dots, p_r$ is an expression of the form
$p_1^{u_1} p_2^{u_2} \cdots p_r^{u_r}$ where $u_1, \ldots, u_r$ are
nonnegative integers.  Writing $u = (u_1, \ldots, u_r)$, let
\[
p^u  := p_1^{u_1} p_2^{u_2} \cdots p_r^{u_r}.
\]
A \emph{polynomial} is a finite linear combination of monomials, i.e.
\[
f =   \sum_{u \in U}  c_u  p^u
\]
where $U \subset \mathbb{N}^r$ is a finite set and $c_{u} \in \Bbbk$.  Of course,
one is used to thinking of a polynomial as a function,
$f: \Bbbk^r \rightarrow \Bbbk$, which can be evaluated in a point
$a \in \Bbbk^r$ for~$p$.  In the following, this function will usually
have the role of a constraint; i.e., the object of interest is the
zero set $\{a\in\Bbbk^r : f(a) = 0\}$.
In algebra, it is also useful to think of a
polynomial as a formal object, i.e.~the indeterminates are simply
symbols that are used in manipulations, with no need for them to be
evaluated.

The set of all polynomials in indeterminates $p_1, \ldots, p_r$ with
coefficients in $\Bbbk$ is called the \emph{polynomial ring}
and denoted~$\Bbbk[p_1, \ldots, p_r]$.  The word \emph{ring} means
that $\Bbbk[p_1, \ldots, p_r]$ has two operations, namely addition of
polynomials and multiplication of polynomials, and that these
operations satisfy all the usual properties of addition and
multiplication (associativity, commutativity, distributivity).
However, multiplicative inverses need not exist: The result of
dividing one polynomial by another non-constant polynomial is in
general not a polynomial, but a rational function.

\begin{defn}
Let $\mathcal{F} \subseteq \Bbbk[p_1, \ldots, p_r]$.  The
\emph{variety} defined by $\mathcal{F}$ is the vanishing set of the
polynomials in $\mathcal{F}$, that is,
\[
V(\mathcal{F}) = \{ a \in \Bbbk^r :  f(a) = 0  \mbox{ for all } f \in \mathcal{F} \}.
\]
\end{defn}

\begin{example}
  Let $r =2$ and consider
  $\mathcal{F} = \{ x^2 - y \} \subseteq \Bbbk[x,y]$. The variety
  $V( \{ x^2 - y \}) \subseteq \Bbbk^2$ is the familiar parabola
  ``$y = x^2$'' in the plane.  For $r = 4$ and
  $\mathcal{F} = \{ p_{11}p_{22} - p_{12}p_{21} \} \subseteq
  \Bbbk[p_{11}, p_{12}, p_{21}, p_{22}]$, the variety
  $V(\{ p_{11}p_{22} - p_{12}p_{21} \} ) \subseteq \Bbbk^4$ is the set
  of all singular $2 \times 2$ matrices
  $\left(\begin{smallmatrix} p_{11} & p_{12} \\ p_{21} & p_{22}
\end{smallmatrix}\right)$
 with entries in $\Bbbk$.
\end{example}

\begin{example}\label{ex:4points}
Let $\mathcal{F} =  \{ x^2 - y, x^2 + y^2 - 1  \}$.  The variety $V(\mathcal{F})$
is the set of points 
\[
\{(x,y) \in \Bbbk^2 :  y = x^2 \mbox{ and } x^2 + y^2  = 1 \},
\]
in other words, the intersection of a parabola and a circle. The
number of points in this intersection varies depending on whether the
underlying field is $\mathbb{Q}$, $\mathbb{R}$, or $\mathbb{C}$ (or
some other field).  If $\Bbbk = \mathbb{Q}$ the variety is empty, if
$\Bbbk = \mathbb{R}$ the variety has two points, and if $\Bbbk =
\mathbb{C}$, the variety has four points.  In statistical applications
one is usually interested in solutions over~$\mathbb{R}$.  However, it
is often easier to first perform computations in algebraically closed
fields like~$\mathbb{C}$ before restricting to the real numbers, at
least in theory.  On the other hand, when using a computer algebra
system, it may be advantageous to work with $\mathbb{Q}$, if possible,
because rational numbers can be represented exactly on a computer.
\end{example}

The examples so far have always used finite sets~$\mathcal{F}$.  This
is not necessary for the definition of a variety, and it is often
worthwhile to consider the variety $V(\mathcal{F})$ where
$\mathcal{F}$ is an infinite set of polynomials.  In fact, it is often
convenient to replace the original set of polynomials $\mathcal{F}$ by
an infinite set, the \emph{ideal} generated by $\mathcal{F}$, which is
equivalent to $\mathcal{F}$ in some sense but has more structure.

One reason is that different families of polynomials may have the same
varieties.  For example, if $f,g\in\mathcal{F}$, then the variety of
$\mathcal{F}\cup\{f+g\}$ equals $V(\mathcal{F})$.  Similarly, for
$f\in\mathcal{F}$ and $\lambda\in k$, the variety of
$\mathcal{F}\cup\{\lambda f\}$ equals $V(\mathcal{F})$.

\begin{defn}
A set $I \subseteq \Bbbk[p_1, \ldots, p_r]$ is an \emph{ideal} if for
all $f,g \in I$, $f + g \in I$ and for all $f \in I$ and
$h \in \Bbbk[p_1, \ldots, p_r]$,  $hf \in I$.
\end{defn}

\begin{defn}\label{d:idealgen}
Let $\mathcal{F} \subseteq \Bbbk[p_1, \ldots, p_r]$ be a set of
polynomials.  The \emph{ideal generated by} $\mathcal{F}$ is the
smallest ideal in $\Bbbk[p_1, \ldots, p_r]$ that contains
$\mathcal{F}$.  Equivalently, the ideal generated by $\mathcal{F}$
consists of all polynomials $h_1 f_1 + \cdots + h_k f_k$ for
$h_1, \ldots, h_k \in \Bbbk[p_1, \ldots, p_r]$,
$f_1, \ldots, f_k \in \mathcal{F}$, and any $k$.  The ideal generated
by $\mathcal{F}$ is denoted $\langle \mathcal{F} \rangle$.
\end{defn}

\begin{prop}\label{prop:idealgen}
Let $\mathcal{F} \subseteq \Bbbk[p_1, \ldots, p_r]$ be a set of
polynomials.  Then $V(\mathcal{F}) = V( \langle \mathcal{F} \rangle)$.
\end{prop}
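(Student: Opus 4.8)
The plan is to prove the two inclusions $V(\langle\mathcal{F}\rangle)\subseteq V(\mathcal{F})$ and $V(\mathcal{F})\subseteq V(\langle\mathcal{F}\rangle)$ separately. The first inclusion is immediate: since $\mathcal{F}\subseteq\langle\mathcal{F}\rangle$, any point $a$ that vanishes on every polynomial of $\langle\mathcal{F}\rangle$ in particular vanishes on every polynomial of $\mathcal{F}$, so $V(\langle\mathcal{F}\rangle)\subseteq V(\mathcal{F})$ with no work at all.

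For the reverse inclusion, let $a\in V(\mathcal{F})$, so $f(a)=0$ for all $f\in\mathcal{F}$. By the explicit description in Definition~\ref{d:idealgen}, an arbitrary element $g\in\langle\mathcal{F}\rangle$ can be written as $g=h_1f_1+\cdots+h_kf_k$ for some $h_1,\dots,h_k\in\Bbbk[p_1,\dots,p_r]$, some $f_1,\dots,f_k\in\mathcal{F}$, and some $k$. Evaluating at $a$ and using that evaluation is a ring homomorphism $\Bbbk[p_1,\dots,p_r]\to\Bbbk$, one gets $g(a)=h_1(a)f_1(a)+\cdots+h_k(a)f_k(a)=h_1(a)\cdot 0+\cdots+h_k(a)\cdot 0=0$. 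Hence $g$ vanishes at $a$ as well; since $g$ was arbitrary, $a\in V(\langle\mathcal{F}\rangle)$. This gives $V(\mathcal{F})\subseteq V(\langle\mathcal{F}\rangle)$, and combining the two inclusions proves the equality.

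Strictly speaking, the only point that requires a word of justification is the compatibility of evaluation with the ring operations, i.e. that $(h_1f_1+\cdots+h_kf_k)(a)=h_1(a)f_1(a)+\cdots+h_k(a)f_k(a)$; this is a routine consequence of the definitions of polynomial addition and multiplication and is usually taken for granted. There is no real obstacle here — the content of the proposition is essentially just unwinding Definition~\ref{d:idealgen} — but it is worth stating cleanly because it is the formal underpinning of why one may freely pass from a generating set $\mathcal{F}$ to the ideal $\langle\mathcal{F}\rangle$ without changing the variety, a move used repeatedly in the rest of the chapter.
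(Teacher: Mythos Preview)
Your proof is correct and is exactly the standard argument for this elementary fact. The paper itself states Proposition~\ref{prop:idealgen} without proof, treating it as a basic observation, so there is no ``paper's own proof'' to compare against; your write-up fills in precisely the routine double-inclusion argument one would expect.
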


\begin{example}
The ideal $\langle x^2 - y, x^2 + y^2 - 1 \rangle$ generated by the
set $\mathcal{F}$ from Example~\ref{ex:4points} has many different
possible generating sets.  For example, an alternate generating set is
$\{ x^2 - y, x^4 + x^2 - 1 \}$.  This allows to easily find all the
solutions of the polynomial system because all roots of the univariate
polynomial $x^4 + x^2 - 1 = 0$ can be plugged into the second
polynomial $x^2 - y = 0$, which can then be solved for~$y$.
\end{example}

Hilbert's basis theorem implies that for any ideal
$I\subseteq \Bbbk[p_1, \ldots, p_r]$ there exists a finite set of
polynomials $\mathcal{F} \subseteq \Bbbk[p_1, \ldots, p_r]$ such that
$I = \langle \mathcal{F} \rangle$.

Even though it is, for theoretical considerations, often easier to
think about systems of polynomial equations in terms of ideals, in
practice (i.e. when working with computer algebra systems), the ideal
is almost always specified in terms of a finite set of generators (or
such a finite set of generators has to be computed on the way).  On
the other hand, during a computation it is often necessary to replace
this set of generators by a more convenient set of generators (e.g. a
Gr\"obner basis), so the generators may change even though the ideal
stays the same along a computation.

\begin{defn}\label{def:vanishing}
  Let $S \subseteq \Bbbk^r$.  The \emph{vanishing ideal} of $S$ is the
  set
  \[
    I(S) :=  \{  f \in \Bbbk[p_1, \ldots, p_r] :  f(a) = 0  \mbox{ for all } a \in S \}. 
  \]
\end{defn}

It is easy to check that a vanishing ideal is indeed an ideal.
Clearly, any ideal~$J$ satisfies $J\subseteq I(V(J))$.  However, the
converse inclusion does not hold in general.  For instance,
$I(V(\langle x^2 \rangle)) = \langle x \rangle$, and
$I(V(\langle x^2y, xy^2 \rangle)) = \langle xy \rangle$ (over any
field $\Bbbk$).

\begin{defn}
The ideal $I(V(J))$ is the \emph{$\Bbbk$-radical of $J$}.
An ideal $J$ such that $I(V(J)) = J$ is a \emph{$\Bbbk$-radical
ideal}.  If $\Bbbk$ is algebraically closed (e.g.~if $\Bbbk =
\mathbb{C}$), such an ideal $J$ is simply called a \emph{radical
ideal}.
\end{defn}

Radical ideals can also be characterized algebraically, and there are
algorithms to compute radicals.  The radical is usually a simpler
ideal, and if the radical of an ideal can be computed, it is
advantageous to do this in a first step in each calculation (as long
as one is only interested in properties of~$V(J)$, and not in
algebraic properties of~$J$).

The following proposition illustrates the close relation between
ideals and varieties.

\begin{prop}\label{prop:cupcap} $ $
\begin{itemize}
\item Let $S_1,S_2\subseteq\Bbbk^r$.  Then $I(S_1\cup S_2) = I(S_1)
\cap I(S_2)$ and $I(S_1\cap S_2)  \supseteq I(S_1) + I(S_2) := \{ f + g: f\in
I(S_1), g\in I(S_2)\}$.
\item Let $I,J\subseteq\Bbbk[p_1,\dots,p_r]$.  Then 
$V(I\cup J) = V(I + J) =  V(I)\cap V(J)$ and $V(I\cap J) = V(I)\cup V(J)$.
\end{itemize}
\end{prop}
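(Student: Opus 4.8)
The plan is to prove each of the four set identities directly from the definitions of $I(\cdot)$, $V(\cdot)$, and the ideal operations, treating the two bullet points separately.

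For the first bullet point, I would start with the equality $I(S_1 \cup S_2) = I(S_1) \cap I(S_2)$. A polynomial $f$ vanishes on $S_1 \cup S_2$ if and only if it vanishes on every point of $S_1$ and on every point of $S_2$; the former says $f \in I(S_1)$ and the latter says $f \in I(S_2)$, so $f$ lies in the intersection. This is a short double inclusion, really just unwinding the definition of the union of point sets. For the inclusion $I(S_1 \cap S_2) \supseteq I(S_1) + I(S_2)$, I would take $f \in I(S_1)$ and $g \in I(S_2)$ and observe that at any point $a \in S_1 \cap S_2$ we have $f(a) = 0$ because $a \in S_1$ and $g(a) = 0$ because $a \in S_2$, hence $(f+g)(a) = 0$; since $I(S_1 \cap S_2)$ is an ideal, it is closed under addition, so the whole set $\{f+g : f \in I(S_1), g \in I(S_2)\}$ lies inside it. I would also remark, or leave as an implicit point, that equality can fail here — the sum $I(S_1) + I(S_2)$ need not be radical even when both summands are — mirroring the earlier examples in the excerpt showing $J \subsetneq I(V(J))$ in general.

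For the second bullet point, the equality $V(I \cup J) = V(I+J)$ follows because a point kills every polynomial in $I \cup J$ exactly when it kills every polynomial in $I$ and every polynomial in $J$, and by linearity this is equivalent to killing every element of $I + J$; more cleanly, $\langle I \cup J \rangle = I + J$ when $I,J$ are already ideals, so this is a special case of Proposition~\ref{prop:idealgen}. The identity $V(I+J) = V(I) \cap V(J)$ is then the same computation phrased differently: $a \in V(I+J)$ iff $f(a) = 0$ for all $f \in I$ and $g(a) = 0$ for all $g \in J$, which is precisely $a \in V(I) \cap V(J)$. Finally, for $V(I \cap J) = V(I) \cup V(J)$, the inclusion $\supseteq$ is immediate since $I \cap J \subseteq I$ and $I \cap J \subseteq J$ force $V(I), V(J) \subseteq V(I \cap J)$. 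For $\subseteq$, I would take $a \notin V(I) \cup V(J)$, pick $f \in I$ with $f(a) \neq 0$ and $g \in J$ with $g(a) \neq 0$, and note that $fg \in I \cap J$ (using the ideal absorption property on both sides) while $(fg)(a) = f(a)g(a) \neq 0$, so $a \notin V(I \cap J)$.

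The proof is essentially a sequence of routine definition-chases, so there is no serious obstacle; the one place demanding a moment's care is the $\subseteq$ direction of $V(I \cap J) = V(I) \cup V(J)$, where one must use the product $fg$ rather than a sum to land inside $I \cap J$, together with the absorption axiom of an ideal applied on each side. Everything else is symmetric double-inclusion, and Proposition~\ref{prop:idealgen} absorbs the bookkeeping involved in passing between a generating set and the ideal it generates.
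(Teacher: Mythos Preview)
Your proof is correct and complete; each of the four identities is handled by the standard definition-chase, and you correctly identify the one non-trivial step (using $fg \in I \cap J$ for the $\subseteq$ direction of $V(I\cap J) = V(I)\cup V(J)$). The paper itself states this proposition without proof, so there is nothing to compare against.
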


\subsection{Irreducible and primary decomposition}
\label{sec:decompositions}

Proposition~\ref{prop:cupcap} shows that the union of two varieties is
again a variety.  Interestingly, not every variety can be written as a
non-trivial finite union.

\begin{defn}
  A variety $V$ is \emph{reducible} if there are two varieties
  $V_1, V_2 \neq V$ such that $V_1 \cup V_2 = V$.  Otherwise $V$ is
  \emph{irreducible}.
\end{defn}

\begin{thm}
Any variety $V \subseteq \Bbbk^r$ has a unique decomposition into
finitely many irreducible varieties $V = V_1 \cup \cdots \cup V_k$
(with $V_i\not\subseteq V_j$ for $i\neq j$).
\end{thm}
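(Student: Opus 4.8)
The plan is to prove existence and uniqueness of the decomposition separately, with Hilbert's basis theorem supplying the finiteness needed for existence. The first step is to record that varieties in $\Bbbk^r$ satisfy the \emph{descending chain condition}: any chain $V_1 \supseteq V_2 \supseteq \cdots$ of varieties stabilizes. This follows because the map $V \mapsto I(V)$ reverses inclusions and is injective on varieties: for any variety $V$ one has $V(I(V)) = V$ (immediate from the definitions), so $I(V) = I(V')$ forces $V = V'$, and a strictly descending chain of varieties would yield a strictly ascending chain of ideals $I(V_1) \subsetneq I(V_2) \subsetneq \cdots$, which Hilbert's basis theorem forbids. The form of this fact I will actually use is the equivalent statement that every nonempty collection of varieties in $\Bbbk^r$ contains a member that is minimal with respect to inclusion.

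For existence I would argue by Noetherian induction. Let $\mathcal{S}$ be the collection of all varieties in $\Bbbk^r$ that cannot be written as a finite union of irreducible varieties, and suppose for contradiction that $\mathcal{S} \neq \emptyset$. Pick $V \in \mathcal{S}$ minimal. Then $V$ is not irreducible (otherwise $V = V$ already exhibits such a union), so there are varieties $V_1, V_2$ with $V_1 \cup V_2 = V$ and $V_1, V_2 \neq V$; since $V_i \subseteq V_1 \cup V_2 = V$, both inclusions are proper. By minimality of $V$ neither $V_1$ nor $V_2$ lies in $\mathcal{S}$, so each is a finite union of irreducible varieties, and concatenating these two unions writes $V$ as a finite union of irreducibles, contradicting $V \in \mathcal{S}$. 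Hence $\mathcal{S} = \emptyset$, and every variety admits a decomposition $V = V_1 \cup \cdots \cup V_k$ into irreducibles. Repeatedly discarding any $V_i$ contained in some $V_j$ with $j \neq i$ produces an \emph{irredundant} such decomposition, that is, one with $V_i \not\subseteq V_j$ for $i \neq j$.

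For uniqueness, suppose $V = V_1 \cup \cdots \cup V_k = W_1 \cup \cdots \cup W_m$ are two irredundant decompositions into irreducibles. Fix an index $i$. Then $V_i = \bigcup_j (V_i \cap W_j)$ is a finite union of subvarieties of the irreducible variety $V_i$, so $V_i = V_i \cap W_j$ for some $j$, i.e.\ $V_i \subseteq W_j$. Applying the same argument to $W_j$ gives $W_j \subseteq V_{i'}$ for some $i'$, hence $V_i \subseteq V_{i'}$; irredundancy of the $V$'s forces $i' = i$, and therefore $V_i = W_j$. Symmetrically each $W_j$ equals some $V_i$, so the two lists coincide after reordering.

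I expect the only genuinely nontrivial ingredient to be the finiteness input: passing from Hilbert's basis theorem (the ascending chain condition on ideals) to the descending chain condition for varieties, equivalently to the existence of minimal elements, and then setting up the Noetherian induction cleanly. Once that is in hand, both the construction of an irredundant decomposition and the proof of its uniqueness are short formal arguments that use nothing beyond the definition of irreducibility and the elementary identities in Proposition~\ref{prop:cupcap}.
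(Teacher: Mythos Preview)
Your argument is correct and is the standard textbook proof (essentially the one in Cox--Little--O'Shea, which the paper cites as its reference for this background material). Note, however, that the paper itself gives \emph{no} proof of this theorem: it is stated without proof as one of several basic facts from algebraic geometry in a survey chapter, so there is nothing to compare against beyond observing that your write-up matches the classical argument the authors implicitly defer to.
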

The varieties $V_1,\dots,V_k$ are called the \emph{irreducible
components} of~$V$.

\begin{thm}
\begin{enumerate}
\item Let $V$ be an irreducible variety, and let
$\phi:\Bbbk^r\to\Bbbk^s$ be a rational map.  Then $V(I(\phi(V)))$ is
irreducible.
\item Let $V$ be a variety that has a rational
parametrization $\phi:\Bbbk^r\to V$ such that the image of $\phi$ is
dense in~$V$.  Then $V$ is irreducible.
\end{enumerate}
\end{thm}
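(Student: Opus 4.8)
The plan is to prove part (1) and then obtain part (2) as the special case $V = \Bbbk^r$. Throughout, recall that for a set $S$ the variety $V(I(S))$ is the smallest variety containing $S$ (its Zariski closure), and that $V(I(W)) = W$ when $W$ is itself a variety. For part (1), write $W := V(I(\phi(V)))$, so the goal is to show that the Zariski closure of the image $\phi(V)$ is irreducible. I would argue by contradiction: assume $W = W_1 \cup W_2$ with $W_1, W_2$ varieties and $W_1, W_2 \neq W$, and derive that one of the $W_i$ must in fact equal $W$. The mechanism is to pull this decomposition back through $\phi$ to a decomposition of a dense subset of $V$, and then use that $V$ is irreducible.

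Since a rational map is defined only on a dense open set $U \subseteq \Bbbk^r$ (the complement of the union of the zero sets of the denominators), I interpret $\phi(V)$ as $\phi(V \cap U)$ and assume $V \cap U \neq \emptyset$ (otherwise $\phi(V) = \emptyset$ and $W = \emptyset$, which is irreducible). Two preliminary facts are needed. First, because $V$ is irreducible and $V \cap (\Bbbk^r \setminus U)$ is a subvariety of $V$ different from $V$, the set $V \cap U$ is dense in $V$; hence $W$ is also the Zariski closure of $\phi(V \cap U)$. Second, the preimage under $\phi$ of a variety is closed in $U$: if $W_i = V(h_1, \dots, h_m)$, then clearing denominators in each $h_j(\phi(a))$ by multiplying with a high enough power of the product of the denominators (which is nonzero on $U$) produces polynomials $H_{i,1}, \dots, H_{i,m}$ such that $\phi^{-1}(W_i) = Z_i \cap U$ where $Z_i := V(H_{i,1}, \dots, H_{i,m})$.

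The core argument then runs as follows. From $\phi(V \cap U) \subseteq W = W_1 \cup W_2$ we obtain $V \cap U = (Z_1 \cap V \cap U) \cup (Z_2 \cap V \cap U)$. Taking Zariski closures in $\Bbbk^r$, the left-hand side has closure $V$ since $V \cap U$ is dense in $V$, while the closure of $Z_i \cap V \cap U$ is contained in the variety $Z_i \cap V$. Thus $V$ is the union of the two subvarieties $\overline{Z_1 \cap V \cap U}$ and $\overline{Z_2 \cap V \cap U}$, so irreducibility of $V$ forces one of them, say the first, to equal $V$. Then $Z_1 \cap V \cap U$ is dense in $V$ and hence in $V \cap U$; but it is also closed in $V \cap U$ (it is $Z_1$ intersected with $V \cap U$, and $Z_1$ is closed in $\Bbbk^r$), so it equals $V \cap U$. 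Therefore $V \cap U \subseteq Z_1 \cap U = \phi^{-1}(W_1)$, i.e.\ $\phi(V \cap U) \subseteq W_1$; since $W_1$ is a variety and $W$ is the Zariski closure of $\phi(V \cap U)$, this yields $W = W_1$, contradicting $W_1 \neq W$. For part (2), apply part (1) with $V$ replaced by $\Bbbk^r$, which is irreducible over an infinite field: if $\Bbbk^r = V(\mathcal{F}_1) \cup V(\mathcal{F}_2)$ with both pieces proper, choose $f \in \mathcal{F}_1$ and $g \in \mathcal{F}_2$ not identically zero; then $fg$ vanishes on all of $\Bbbk^r$, which forces $fg = 0$ and hence $f = 0$ or $g = 0$, a contradiction. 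By part (1) the variety $V(I(\phi(\Bbbk^r)))$ is irreducible, and since $\phi(\Bbbk^r)$ is dense in $V$ it equals $V(I(V)) = V$; so $V$ is irreducible.

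I expect the main obstacle to be the bookkeeping caused by $\phi$ not being defined everywhere: one must verify carefully that $V \cap U$ is dense in $V$, that preimages of varieties are closed in $U$, and that a subset of $V \cap U$ which is both dense and closed in $V \cap U$ must be all of $V \cap U$. Once these are in place, the heart of the proof is just the routine pullback of an irreducible decomposition. A shorter alternative phrases everything ideal-theoretically — a variety is irreducible iff its vanishing ideal is prime, and $I(\phi(V))$ is shown to be prime by clearing denominators and invoking primeness of $I(V)$ — but this relies on the prime-ideal characterization of irreducibility, which has not been introduced in the text above.
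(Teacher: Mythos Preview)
The paper does not prove this theorem: it appears in the overview Section~\ref{sec:alggeom} as one of several basic facts from algebraic geometry stated without proof, with the reader referred to a standard textbook for details. So there is no ``paper's own proof'' to compare against.

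Your argument is the standard one and is correct. The pullback of a putative decomposition $W=W_1\cup W_2$ through $\phi$, the handling of the domain $U$ on which the rational map is defined, the density of $V\cap U$ in $V$ via irreducibility, the clearing of denominators to realize $\phi^{-1}(W_i)$ as $Z_i\cap U$ with $Z_i$ closed, and the dense-plus-closed step forcing $Z_1\cap V\cap U=V\cap U$ are all sound. Your remark that part~(2) requires $\Bbbk$ to be infinite so that $\Bbbk^r$ is irreducible is a genuine technical point the paper's statement does not mention; since the paper only works with $\mathbb{Q}$, $\mathbb{R}$, and $\mathbb{C}$ in practice, this causes no trouble in context, but it is worth flagging.
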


According to Proposition~\ref{prop:cupcap}, the corresponding
decomposition operation for ideals is to write ideals as the
intersection of other ideals.  However, for general ideals, the
situation is much more complicated than for varieties.  The situation
simplifies for radical ideals (which are in a one-to-one
correspondence with varieties).  This case is discussed next.  The
general case is summarized afterwards.

\begin{defn}
An ideal $I \subseteq \Bbbk[p_1, \ldots, p_r]$ is \emph{prime} if for
all $f,g\in \Bbbk[p_1, \ldots, p_r]$ with $f\cdot g\in I$, one of the
factors~$f,g$ belongs to~$I$.
\end{defn}
For example, $I:=\langle xy\rangle$ is not prime, because $xy\in I$,
but neither $x\in I$ nor $y\in I$.
\begin{thm}
A variety $V$ is irreducible if and only if~$I(V)$ is prime.
\end{thm}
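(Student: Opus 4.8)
The plan is to prove both directions directly from the definitions, using the order-reversing correspondence between inclusions of varieties and inclusions of their vanishing ideals. A preliminary observation I would record first is that $V(I(W)) = W$ for every variety $W$: the inclusion $W \subseteq V(I(W))$ always holds, and writing $W = V(\mathcal{F})$, every $f \in \mathcal{F}$ vanishes on $W$, so $\mathcal{F} \subseteq I(W)$ and hence $V(I(W)) \subseteq V(\mathcal{F}) = W$.

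For the direction ``$V$ irreducible $\Rightarrow I(V)$ prime'', suppose $fg \in I(V)$. Since $\Bbbk$ is a field, at each point $a \in V$ we have $f(a) = 0$ or $g(a) = 0$, so $V = \bigl(V \cap V(\langle f\rangle)\bigr) \cup \bigl(V \cap V(\langle g\rangle)\bigr)$. By Proposition~\ref{prop:cupcap} both pieces are varieties, and both are contained in $V$, so irreducibility forces one of them, say $V \cap V(\langle f\rangle)$, to equal $V$. Then $f$ vanishes on all of $V$, i.e.\ $f \in I(V)$, which is exactly what ``prime'' demands.

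For the converse, suppose $I(V)$ is prime and $V = V_1 \cup V_2$ with $V_1, V_2$ varieties; I want to show $V_i = V$ for some $i$. If not, then $V_1 \subsetneq V$, and the induced inclusion $I(V) \subseteq I(V_1)$ must be strict --- this is the one point that needs the preliminary observation, since $I(V_1) = I(V)$ would give $V_1 = V(I(V_1)) = V(I(V)) = V$. So I can pick $f \in I(V_1) \setminus I(V)$ and likewise $g \in I(V_2) \setminus I(V)$; then $fg$ vanishes on $V_1 \cup V_2 = V$, so $fg \in I(V)$, and primeness gives $f \in I(V)$ or $g \in I(V)$, a contradiction. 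Thus $V$ is irreducible. The argument is essentially routine; the only delicate bookkeeping is the strictness of $I(V) \subsetneq I(V_i)$ above, and the degenerate case $V = \emptyset$ (irreducible, with $I(\emptyset)$ the whole ring, vacuously prime in the sense defined here) is consistent with the statement.
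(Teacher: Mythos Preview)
Your proof is correct and is the standard textbook argument (essentially what appears in \cite{Cox2015}, the reference the paper points to for background). Note, however, that the paper itself does not prove this theorem: it is stated as a well-known fact in the survey of algebraic preliminaries, with no argument given. So there is nothing to compare against beyond observing that your approach is the expected one. Your remark on the degenerate case $V=\emptyset$ is consistent with the paper's definitions as written, though most sources exclude the unit ideal from being prime and the empty set from being irreducible.
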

\begin{defn}
A prime ideal $P \subseteq \Bbbk[p_1, \ldots, p_r]$ is a
\emph{minimal prime} of an ideal $I \subseteq \Bbbk[p_1, \ldots, p_r]$
if and only if $V(P)$ is an irreducible component of~$V(I)$.
\end{defn}
There is also an algebraic definition of the minimal primes, and there
are algorithms to compute the minimal primes.  By definition, the
minimal primes of an ideal encode the irreducible decomposition of the
corresponding variety:
\begin{thm}
\begin{enumerate}
\item Any ideal $I \subseteq \Bbbk[p_1, \ldots, p_r]$ has finitely
many minimal primes~$P_1,\dots,P_k$.
\item The ideal $P_1\cap\dots\cap P_k$ equals the radical of~$I$.
\item The irreducible components of $V(I)$ are $V(P_1),\dots,V(P_k)$.
\end{enumerate}
\end{thm}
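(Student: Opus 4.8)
The plan is to show that all three parts follow from a single identification: the minimal primes of $I$ are exactly the ideals $I(V_1),\dots,I(V_k)$, where $V(I)=V_1\cup\dots\cup V_k$ is the decomposition of the variety $V(I)$ into irreducible components furnished by the decomposition theorem for varieties. Everything then reduces to the already-established correspondence between irreducible varieties and prime ideals, together with Proposition~\ref{prop:cupcap}, so that the genuinely Noetherian content is offloaded onto the earlier decomposition theorem.

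First I would record the auxiliary fact that $V(I(S))=S$ for every variety $S$: the inclusion $S\subseteq V(I(S))$ holds for any subset $S$ since every polynomial in $I(S)$ vanishes on $S$, and if $S=V(\mathcal F)$ then $\mathcal F\subseteq I(S)$, whence $V(I(S))\subseteq V(\mathcal F)=S$. Now write $V(I)=V_1\cup\dots\cup V_k$ with the $V_i$ the finitely many irreducible components, and set $P_i:=I(V_i)$. Since each $V_i$ is irreducible, each $P_i$ is prime by the theorem characterizing irreducibility in terms of $I(V)$. By the auxiliary fact $V(P_i)=V_i$, so each $V(P_i)$ is an irreducible component of $V(I)$, and hence each $P_i$ is, by definition, a minimal prime of $I$. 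Conversely, if $P$ is any minimal prime of $I$, then $V(P)$ is an irreducible component of $V(I)$, so by uniqueness of the decomposition $V(P)=V_i$ for some $i$; applying $I(\cdot)$ and using that a prime ideal coincides with the vanishing ideal of its variety gives $P=I(V(P))=I(V_i)=P_i$. This shows $P_1,\dots,P_k$ are exactly the minimal primes, which is part~1 (finiteness) together with an explicit list, and the equalities $V(P_i)=V_i$ are precisely part~3.

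For part~2, iterating the identity $I(S_1\cup S_2)=I(S_1)\cap I(S_2)$ from Proposition~\ref{prop:cupcap} yields
\[
P_1\cap\dots\cap P_k \;=\; I(V_1)\cap\dots\cap I(V_k) \;=\; I(V_1\cup\dots\cup V_k) \;=\; I(V(I)),
\]
and $I(V(I))$ is by definition the radical of $I$.

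The one step that is not purely formal is the claim, used in the converse direction, that a prime ideal $P$ satisfies $P=I(V(P))$, i.e.\ that prime ideals are radical; this is where Hilbert's Nullstellensatz enters and where one wants $\Bbbk$ algebraically closed (over a field such as $\mathbb R$ the statement must be read with the algebraic notions of minimal prime and radical in place of the geometric ones). I therefore expect that to be the main obstacle, together with the fact that I am relying on the finiteness of the irreducible decomposition of a variety as already known; a from-scratch algebraic proof of part~1 would instead run a Noetherian induction on the poset of radical ideals, splitting at a product $fg\in I$ with $f,g\notin\sqrt I$, and that induction would be the corresponding obstacle on that alternative route.
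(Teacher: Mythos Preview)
The paper does not actually prove this theorem: it is stated in Section~\ref{sec:decompositions} as a standard background result from commutative algebra, with the reader referred to~\cite{Cox2015} for details. So there is no proof in the paper to compare against.

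That said, your argument is coherent and well adapted to the paper's setup. Because the paper \emph{defines} a minimal prime of $I$ to be a prime ideal $P$ with $V(P)$ an irreducible component of $V(I)$, and because it has already asserted the existence and uniqueness of irreducible decompositions of varieties, your reduction to that earlier theorem is entirely natural: parts~1 and~3 are then close to tautological once you exhibit $P_i=I(V_i)$ and verify $V(P_i)=V_i$, and part~2 is exactly the iterated use of Proposition~\ref{prop:cupcap} together with the paper's definition of the $\Bbbk$-radical as $I(V(I))$. You are also right to flag the one nontrivial step, namely $P=I(V(P))$ for prime $P$, which is the Nullstellensatz and requires $\Bbbk$ algebraically closed; the paper itself hedges here by distinguishing the ``$\Bbbk$-radical'' from the radical and by remarking that there is also an algebraic definition of minimal primes. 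Over a non-closed field your geometric argument can break down (distinct primes may have the same variety), and one would instead run the purely algebraic Noetherian argument you sketch at the end. A textbook treatment would typically take that algebraic route first and deduce the geometric statements from it, which is the reverse of what you do, but given the order in which the paper presents the material your approach is the economical one.
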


If $I$ is not radical, then $P_1\cap\dots\cap P_k\subseteq I$.  In
this case, it is still possible to write $I$ as an intersection of
special ideals (called \emph{primary ideals}) in a way that is
algebraically and geometrically meaningful.  This intersection is
called a \emph{primary decomposition}.  The precise definitions are
omitted, since a primary decomposition often adds little to the
statistical understanding.  However, some works in algebraic
statistics written by algebraists who do care about the differences
between ideals and their radicals use this notation.  The following
result explains how a primary decomposition is related to the minimal
primes.

\begin{thm}
Let $I = I_1\cap\dots\cap I_l$ be a primary decomposition of $I
\subseteq \Bbbk[p_1, \ldots, p_r]$, and let $P_i$ be the radical
of~$I_i$.
\begin{enumerate}
\item $V(I) = V(I_1)\cup V(I_2) \cup\dots\cup V(I_l) = V(P_1)\cup
V(P_2) \cup\dots\cup V(P_l)$.
\item Each $P_i$ is prime.
\item Each minimal prime of $I$ is among the~$P_i$.
\item If $P_i$ is not a minimal prime of~$I$, then there is a minimal
prime $P_j$ of $I$ with $P_j\subset P_i$ (and so $V(P_i)\subset
V(P_j)$).
\end{enumerate}
\end{thm}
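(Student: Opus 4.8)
The plan is to deduce all four parts from the ideal--variety dictionary already developed --- in particular Proposition~\ref{prop:cupcap} and the preceding theorem on minimal primes --- using only one property of primary ideals: the radical of a primary ideal is prime. To keep the geometry clean I would take $\Bbbk$ algebraically closed (the statistically relevant case being $\Bbbk=\mathbb{C}$), so that a prime ideal is recovered from its variety via $P=I(V(P))$ by Hilbert's Nullstellensatz; for a general field one keeps the geometric definition of minimal prime, reads ``radical'' as the $\Bbbk$-radical $I(V(\cdot))$, and the same argument yields the ideal-theoretic assertions. I would also use the standard algebraic description: the minimal primes of~$I$ are the minimal elements among the primes containing~$I$, which over an algebraically closed field agrees with the geometric definition given in the text.

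For Part~1 I would first note that $V(J)=V(\sqrt J)$ for any ideal~$J$ (a point kills $f$ iff it kills $f^{n}$), so $V(I_i)=V(P_i)$; then Proposition~\ref{prop:cupcap}, applied $l-1$ times, gives $V(I)=V(I_1\cap\dots\cap I_l)=V(I_1)\cup\dots\cup V(I_l)=V(P_1)\cup\dots\cup V(P_l)$. For Part~2 I would recall the definition suppressed in the text --- $Q$ is \emph{primary} if $Q\neq\Bbbk[p_1,\dots,p_r]$ and $fg\in Q$ forces $f\in Q$ or $g^{m}\in Q$ for some $m\geq1$ --- and then argue: if $fg\in P_i=\sqrt{I_i}$, then $f^{n}g^{n}=(fg)^{n}\in I_i$ for some~$n$, so the primary property of~$I_i$ yields either $f^{n}\in I_i$, whence $f\in P_i$, or $(g^{n})^{m}\in I_i$ for some~$m$, whence $g\in P_i$; thus $P_i$ is prime.

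Parts~3 and~4 are then bookkeeping with containments. Each $P_i$ contains~$I$, since $P_i\supseteq I_i\supseteq I$. If $P$ is a minimal prime of~$I$, then $P\supseteq I\supseteq I_1\cdots I_l$ because the product of ideals is contained in their intersection; primeness of~$P$ forces $P\supseteq I_j$ for some~$j$, hence $I\subseteq P_j=\sqrt{I_j}\subseteq P$, and since $P_j$ is prime and $P$ is minimal over~$I$ this sandwich forces $P=P_j$ --- this is Part~3. Consequently the minimal primes of~$I$ are exactly those $P_j$ that happen to be minimal over~$I$, and by the preceding theorem their intersection is $\sqrt I$. If now $P_i$ is \emph{not} a minimal prime, then $P_i=\sqrt{P_i}\supseteq\sqrt I$, which is the intersection --- hence contains the product --- of those minimal $P_j$'s; primeness of $P_i$ gives $P_i\supseteq P_j$ for some minimal $P_j$, and the inclusion is strict because $P_i$ is not minimal. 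Applying~$V$ reverses it to $V(P_i)\subsetneq V(P_j)$, which is Part~4.

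The step I expect to require the most care is this last one, together with its mirror image in Part~3: passing between a strict inclusion of (prime) ideals and a strict inclusion of the corresponding varieties. This is exactly the content of the Nullstellensatz, $P=I(V(P))$ for prime~$P$, and it is where algebraic closedness of~$\Bbbk$ enters; once that is granted, every remaining step is a one-line manipulation with results already in hand. The only other ingredient a reader must supply is the definition of a primary ideal, which the surrounding text deliberately omitted.
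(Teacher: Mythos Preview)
The paper states this theorem without proof; it is presented as a standard background result from commutative algebra, and the surrounding text explicitly says that the precise definition of a primary ideal is omitted. So there is no proof in the paper to compare against.

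Your argument is correct and is essentially the textbook proof. Part~1 follows immediately from Proposition~\ref{prop:cupcap} together with $V(J)=V(\sqrt{J})$. Part~2 is the standard verification that the radical of a primary ideal is prime, which you carry out directly from the definition you supply. Parts~3 and~4 are handled cleanly by the product--in--intersection trick combined with the previously stated fact that $\sqrt{I}$ is the intersection of the finitely many minimal primes: if $P$ is prime and contains a finite intersection (hence a finite product) of primes, it contains one of them. Your observation that algebraic closedness is needed both to reconcile the paper's geometric definition of ``minimal prime'' with the algebraic one and to pass from a strict inclusion $P_j\subsetneq P_i$ of primes to a strict inclusion $V(P_i)\subsetneq V(P_j)$ of varieties is exactly right, and it is good that you flag this rather than gloss over it.
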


\begin{example}
Let $I = \langle xy, xz \rangle \in \Bbbk[x,y,z]$.  The variety $V(I)$
consists of the union of the plane where $x = 0$, and the line where
$y = 0, z = 0$.  Hence
$V(\langle xy, xz \rangle ) = V( \langle x \rangle) \cup V( \langle
y,z \rangle) $ is a decomposition into irreducibles.  This corresponds
to the ideal decomposition
$\langle xy, xz \rangle = \langle x \rangle \cap \langle y,z \rangle$.
\end{example}

The primary decomposition need not be unique.

\begin{example}\label{e:embedded}
The ideal $\langle x^2, xy \rangle$ has several
different primary decompositions, e.g.
\[
\langle x^2, xy \rangle = 
\langle x \rangle \cap \langle x^2, y \rangle  = \langle x \rangle \cap \langle x^2, x+ y \rangle
\]
The variety $V(\langle x^2, xy \rangle)$ equals the line where
$x = 0$, corresponding to the unique minimal prime $\langle x\rangle$.
The non-uniqueness of the primary decomposition is related to the fact
that the variety of the ``extra'' component is a subset of one of the
other components.  This variety (which is superfluous in the
irreducible decomposition) is called an \emph{embedded components}.
This example can be analyzed as follows using the computer algebra
system~\textsc{Macaulay2} \cite{M2}.  First set up a polynomial ring
in the indeterminates $x,y$ with the rational numbers $\mathbb{Q}$ as
the coefficient field.  In \textsc{Macaulay2} it is advisable to work
with $\mathbb{Q}$ rather than $\mathbb{R}$ or~$\mathbb{C}$ since the
arithmetic in $\mathbb{Q}$ can be carried out exactly on a computer.
\begin{verbatim}
i1 : R = QQ[x,y]
o1 = R
o1 : PolynomialRing
\end{verbatim}
  The system reports that it understands \verb,R, as a polynomial
  ring.  The following input makes \textsc{Macaulay2} decompose the
  ideal.  The decomposition is computed over $\mathbb{Q}$, but in this
  case it happens to be valid over any field~$\Bbbk$.
\begin{verbatim}
i2 : primaryDecomposition ideal (x^2, x*y)
                       2
o2 = {ideal x, ideal (x , y)}
\end{verbatim}
  If one is only interested in the irreducible decomposition, the
  command \verb,decompose, returns the minimal primes corresponding to the irreducible
  components, discarding all embedded components:
\begin{verbatim}
i3 : decompose ideal (x^2, x*y)
o3 = {ideal x}
\end{verbatim}
\end{example}

\subsection{Binomial ideals}
\label{sec:binomial-ideals}

This Section ends with a short discussion of binomial ideals and toric
ideals, which make frequent appearance in applications.

\begin{defn}\label{d:binomialIdeal}
A \emph{binomial} is a polynomial $p^u - \lambda p^v$,
$\lambda \in \Bbbk$ with at most two terms.  An ideal $I$ is a
\emph{binomial ideal} if it has a generating set of binomials.  A
binomial ideal that is prime and does not contain any variable is a
\emph{toric ideal}.
\end{defn}

The main reason why it is important whether an ideal is binomial is
that there are dedicated algorithms for binomial ideals that are much
faster than the generic algorithms that work for any ideal
\cite{Eisenbud1996, Dickenstein2010, Kahle2012, Kahle2016}.  Note that
there are some instances of ideals that arise in algebraic statistics
that are not binomial in their natural coordinate systems but become
binomial ideals after a linear change of coordinates
\cite{Sturmfels2005}.

Let $A\in\mathbb{Z}^{h\times r}$ be an integer matrix, and consider
the ideal
\begin{equation*}
I_{A} := \big\langle p^{u_{+}} - p^{u_{-}}  : u = u_{+}-u_{-}\in\ker_{\mathbb{Z}}A \big\rangle
\end{equation*}
in the polynomial ring $\Bbbk[p_{1},\dots,p_{r}]$, where
$u=u_{+}-u_{-}$ is the decomposition of $u$ into its positive and
negative part $u_{+},u_{-}\in\mathbb{N}^{r}$.  Clearly, $I_{A}$ is
binomial and does not contain any of the~$p_i$.  One can also show
that $I_{A}$ is prime, and thus it is an example of a toric ideal.  In
fact, any toric ideal is of this form up to a scaling of
coordinates~\cite[Corollary~2.6]{Eisenbud1996}.  The generating set
above is infinite, but Theorem~3.1 in
\cite{DiaconisSturmfels98:Algebraic_Sampling} shows that finite
generating sets of toric ideals are related to Markov bases, which can
be computed using the software \texttt{4ti2}~\cite{4ti2}.

\subsection{Real algebraic geometry}
\label{sec:real-algebr-geom}

In addition to polynomial equations, in many situations in statistics
it is useful to consider solutions to polynomial inequalities as well.
This is the subject of the field \emph{real algebraic geometry}.
Inequalities only make sense over an ordered field like $\mathbb{R}$
(but not over~$\mathbb{C}$).  For
simplicity, the following definitions and results are formulated
with~$\mathbb{R}$.  Again, this text only contains the basic
definitions.  For more details the reader is referred
to~\cite{Basu2006, Bochnak1998}.

\begin{defn}
Let $\mathcal{F}, \mathcal{G} \subseteq \mathbb{R}[p_1, \ldots, p_r]$
be sets of polynomials with $\mathcal{G}$ finite.  The \emph{basic
semialgebraic set} defined by $\mathcal{F}$ and $ \mathcal{G}$ is
\[
\big\{ a \in \mathbb{R}^r :  f(a) = 0 \mbox{ for all } f \in \mathcal{F} \mbox{ and } g(a) > 0 \mbox{ for all } g \in \mathcal{G}  \big\}.
\]
A \emph{semialgebraic set} is a finite union of basic semialgebraic sets.
\end{defn}

Here are some common examples of semialgebraic sets arising in statistics.

\begin{example}
The open probability simplex 
\[
{\rm int} (\Delta_{r-1} ):=  
\big\{ p \in \mathbb{R}^r :  \sum_{i = 1}^r p_i  =1,  p_i > 0, i = 1, \ldots, r  \big\}
\]
consists of all probability distributions for a categorical random
variables with $r$ states.  It is a basic semialgebraic set: In the
above definition, one may take $\mathcal{F} = \big\{ \sum_{i = 1}^r p_i -1
\big\}$ and $\mathcal{G} = \big\{p_1, \ldots, p_r \big\}$.  The probability
simplex
\[
\Delta_{r-1} :=  
\big\{ p \in \mathbb{R}^r :  \sum_{i = 1}^r p_i  =1,  p_i \geq 0, i = 1, \ldots, r  \big\}
\]
is a semialgebraic set.  It can be written as the union of $2^{r} -1$ basic semialgebraic sets.
\end{example}

\begin{example}
The cone $PD_m$ of $m \times m$ positive definite symmetric matrices
is an example of a basic semialgebraic set in
$\mathbb{R}^{\binom{m+1}{2}}$, where $\mathcal{F} = \emptyset$ and
where $\mathcal{G}$ consists of the principal subdeterminants of an
$m \times m$ symmetric matrix of indeterminates.  For instance, if
$m = 3$ consider the polynomial ring
$\mathbb{R}[ \sigma_{11}, \sigma_{12}, \sigma_{13}, \sigma_{22},
\sigma_{23}, \sigma_{33}]$ and the symmetric matrix of indeterminates
\[
\Sigma = \begin{pmatrix}
\sigma_{11} & \sigma_{12} & \sigma_{13}  \\
\sigma_{12} & \sigma_{22} & \sigma_{23}  \\
\sigma_{13} & \sigma_{23} & \sigma_{33}  \\
\end{pmatrix}.
\]
The symmetry has been enforced by making certain entries in the matrix
equal.  The set of polynomials defining $PD_3$ can be chosen to be
\[
\mathcal{G} =  \big\{  \sigma_{11}, \sigma_{11}\sigma_{22} - \sigma_{12}^2, \det \Sigma \big\},
\]
the set of leading principal minors of $\Sigma$.  The cone of positive
semidefinite symmetric matrices is a semialgebraic set, which can be
realized by using non-strict inequalities with the much larger set of
all principal minors of~$\Sigma$.
\end{example}

%%%%%%%%%%%%%%%%%%%%%%%%%%%%%%%%%%%%%%%%%%%%%%%%%%%%%
%%%%%%%%%%%%%%%%%%%%%%%%%%%%%%%%%%%%%%%%%%%%%%%%%%%%%
%%%%%%%%%%%%%%%%%%%%%%%%%%%%%%%%%%%%%%%%%%%%%%%%%%%%%

\section{Conditional Independence Ideals}\label{sec:ciideals}

This section shows how the algebraic tools introduced in
Section~\ref{sec:alggeom} can be used to analyze conditional
independence structures.  The tools can be applied in the settings of
discrete random variables and jointly normal variables, but in
different ways.

\subsection{Discrete random variables}

Let $X_1, X_2, \ldots, X_m$ be finite discrete random
variables.  Suppose that the state
space of $X_i$ is $[r_i] := \{1,2,\ldots, r_i \}$.  There is an
algebraic description of the set of all distributions that satisfy a
given conditional independence statement.  The first example comes
from the simplest CI statement: $\ind{1}{2}$.

\begin{prop}
  Let $X_1, X_2$ be discrete random variables where the state space of
  $X_i$ is~$[r_i]$.  Let $p_{i_1i_2} = P(X_1 = i_1, X_2 = i_2)$ and
  let $p = (p_{i_1 i_2})_{i_1 \in [r_1], i_2 \in [r_2]}$ be the joint
  probability mass function of $X_1$ and~$X_2$.  Then $\ind{1}{2}$ if
  and only if $p$ is a rank one matrix.
\end{prop}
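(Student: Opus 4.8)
\emph{Proof proposal.}
The plan is to translate the definition of $\ind{1}{2}$ directly into a statement about the matrix $p$. By definition, $\ind{1}{2}$ holds precisely when $p_{i_1 i_2} = P(X_1 = i_1)\,P(X_2 = i_2)$ for all $i_1 \in [r_1]$ and $i_2 \in [r_2]$. Introducing the marginal vectors $a = (a_{i_1})$ with $a_{i_1} = \sum_{i_2} p_{i_1 i_2}$ and $b = (b_{i_2})$ with $b_{i_2} = \sum_{i_1} p_{i_1 i_2}$, this condition reads $p = a b^{\top}$. An outer product of two vectors has rank at most one, and since the entries of $p$ sum to $1$ we have $p \neq 0$, so its rank is exactly one. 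This settles the forward implication.

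For the converse I would start from a rank-one factorization $p = u v^{\top}$ with $u \in \mathbb{R}^{r_1}$ and $v \in \mathbb{R}^{r_2}$, and recover the independence equation. Summing all entries gives $1 = \big(\sum_{i_1} u_{i_1}\big)\big(\sum_{i_2} v_{i_2}\big)$; setting $\alpha = \sum_{i_1} u_{i_1}$ and $\beta = \sum_{i_2} v_{i_2}$, this says $\alpha\beta = 1$. Computing the marginals of $p = u v^{\top}$ then yields $p_{i_1+} = \beta\, u_{i_1}$ and $p_{+i_2} = \alpha\, v_{i_2}$, so that $p_{i_1+}\, p_{+i_2} = \alpha\beta\, u_{i_1} v_{i_2} = u_{i_1} v_{i_2} = p_{i_1 i_2}$, which is exactly the defining relation of $\ind{1}{2}$.

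There is no serious obstacle here; the argument is elementary linear algebra once the two definitions are placed side by side. The only points that need a little care are the upgrade from ``rank at most one'' to ``rank one'' (which uses $p \neq 0$, valid because $p$ is a probability distribution) and the normalization bookkeeping in the converse, where the unspecified scalar hidden in the factorization $p = u v^{\top}$ must be seen to cancel against the constraint $\sum p_{i_1 i_2} = 1$. In keeping with the algebraic viewpoint of this chapter, it is worth recording that the condition ``rank of $p$ at most one'' is equivalent to the vanishing of all $2 \times 2$ minors $p_{i_1 i_2} p_{j_1 j_2} - p_{i_1 j_2} p_{j_1 i_2}$, so that the proposition describes the distributions satisfying $\ind{1}{2}$ as exactly the probability mass functions on the determinantal variety cut out by these quadrics.
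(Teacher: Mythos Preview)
Your argument is correct and follows essentially the same outer-product strategy as the paper. The only noteworthy difference is in the converse: the paper first argues that the rank-one factors can be taken nonnegative and then normalizes them by their $l_1$-norms to recover the marginals, whereas you compute the marginals directly from an arbitrary factorization $p=uv^{\top}$ and use $\alpha\beta=1$ to cancel the scalar ambiguity; your bookkeeping is slightly more direct since it sidesteps the nonnegativity discussion entirely.
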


\begin{proof}
If $\ind{1}{2}$ then
$P(X_1 = i_1, X_2 = i_2) = P(X_1 = i_1) P(X_2 = i_2)$.  This expresses
the joint probability mass function as an outer product of two nonzero
vectors, hence $p$ has rank~one.

Conversely, if $p$ has rank one, it is expressed as the outer product
of two vectors $p = \alpha^T \beta$.  Since $p$ is a matrix of
nonnegative real numbers, one can assume that $\alpha$ and $\beta$ are also nonnegative.
Let $\|.\|_{1}$ denote the $l_{1}$-norm.  Replacing $\alpha$ by
$\alpha/ \| \alpha \|_1$ and $\beta$ by $\beta / \|\beta \|_1$, yields
a rank one factorization for $p$ where the two factors are necessarily
the marginal distributions of $X_1$ and $X_2$ respectively.  Hence
$\ind{1}{2}$.
\end{proof}

A nonzero matrix having rank one is characterized by the vanishing of
all its $2 \times 2$ subdeterminants.  Hence, one can associate an
ideal to the independence statement $\ind{1}{2}$.

\begin{defn}
The \emph{conditional independence ideal} for the statement $\ind{1}{2}$ is
\begin{align*}
  I_{\ind{1}{2}} & =
                   \langle  p_{i_1 i_2} p_{j_1 j_2}  - p_{i_1 j_2} p_{j_1 i_2}  :
                   i_1, j_1 \in [r_1],  i_2, j_2 \in [r_2]  \rangle \\
                 & = \langle 2 \times 2 \text{ subdeterminants of } p  \rangle  \subseteq
                   \mathbb{R}[p_{i_1, i_2} : i_1 \in [r_1], i_2 \in
                   [r_2]].
\end{align*}
\end{defn}

\begin{example}
Let $r_1 = 2$ and $r_2 = 3$.  Then
\[
I_{\ind{1}{2}} = \langle p_{11} p_{22} - p_{12}p_{21}, p_{11} p_{23} -
p_{13}p_{21}, p_{12} p_{23} - p_{13}p_{22} \rangle.
\]
\end{example}

The conditional independence ideal $I_{\ind{1}{2}}$ captures the
algebraic structure of the independence condition.  Although all
probability distributions would satisfy the additional constraint that
$\sum_{i_1 \in [r_1], i_2 \in [r_2]} p_{i_1i_2} -1 = 0$, this trivial
constraint is not included in the conditional independence ideal
because leaving it out tends to simplify certain algebraic
calculations.  For example, without this constraint $I_{\ind{1}{2}}$
is a binomial ideal.

More generally, any conditional independence condition for discrete
random variables can be expressed by similar determinantal
constraints.  This requires a bit of notation.  The determinantal
constraints are written in terms of the entries of the joint
distribution of $X_1,\dots,X_m$.  This is a \emph{tensor} $p =
(p_{i_1,\dots,i_m})_{i_j \in [r_j]}$.

Let $A,B,C \subset [m]$ be disjoint subsets of indices of the random
variables $X_1,\dots,X_m$, and $D = [m] \setminus (A\cup B\cup C)$ the
set of indices appearing in none of~$A,B,C$.  Any such assignment
yields a grouping of indices and random variables.  The random vector
$X_A = (X_j)_{j\in A}$ takes values in $\mathcal{R}_A = \prod_{j\in
A}[r_j]$.  Let $\mathcal{R}_B, \mathcal{R}_C$ and $\mathcal{R}_D$ be
defined analogously.  The grouping allows one to write $p =
(p_{i_A,i_B,i_C,i_D})$ where now $i_A \in \mathcal{R}_A$ and similarly
for $i_B,i_C$, and~$i_D$.  The final notational gadget is the
marginalization of $p$ over $D$.  The entries of this marginal
distribution are indexed by
$\mathcal{R}_A,\mathcal{R}_B,\mathcal{R}_C$ and have entries
\[
p_{i_A,i_B,i_C,+} = \sum_{i_D \in \mathcal{R}_D} p_{i_A,i_B,i_C,i_D}
\]
The $+$ indicates the summation.

\begin{defn}\label{d:generalCIideal}
The conditional independence ideal for the conditional independence
statement $\ind{A}{B}[C]$ is
\begin{equation*}
\begin{split}
I_{\ind{A}{B}[C]} = \Big\langle p_{i_A,i_B,i_C,+}\cdot p_{j_A,j_B,i_C,+} -
p_{i_A,j_B,i_C,+}\cdot p_{j_A,i_B,i_C,+}, \text{ for all } \\
i_A,j_A\in\mathcal{R}_A, i_B,j_B\in\mathcal{R}_B,
i_C\in\mathcal{R}_{C} \Big\rangle
\end{split}
\end{equation*}
\end{defn}

The notation simplifies for \emph{saturated conditional independence
statements}, for which $A\cup B \cup C = [m]$.  With this condition
there is no marginalization, and the defining polynomials of
$I_{\ind{A}{B}[C]}$ are binomials.

\begin{example}\label{e:saturatedCI}
  Consider three binary random variables $X_1,X_2,X_3$.  Let
  $p_{111},\dots, p_{222}$ denote the indeterminates standing for the
  elementary probabilities in the joint distribution.  The conditional
  independence ideal of the statement $\ind{1}{3}[2]$ is
\[
  I_{\ind{1}{3}[2]} = \langle p_{111}p_{212} - p_{211}p_{112},
  p_{121}p_{222} - p_{221}p_{122}\rangle.
\]
The conditional independence ideal of the statement
$\ind{1}{3}$ is
\[
  I_{\ind{1}{3}} = \langle (p_{111} + p_{121})(p_{212} + p_{222}) -
  (p_{112} + p_{122})(p_{211} + p_{221}) \rangle.
\]
\end{example}

\begin{prop}\label{prop:prime}
For any conditional independence statement $\ind{A}{B}[C]$,
the conditional independence ideal $I_{\ind{A}{B}[C]}$ is a prime
ideal and hence $V( I_{\ind{A}{B}[C]})$ is an irreducible variety.
\end{prop}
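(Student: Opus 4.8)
The plan is to reduce $I_{\ind{A}{B}[C]}$, by an invertible linear change of coordinates, to a sum of ideals of $2\times 2$ minors of generic matrices in disjoint sets of variables, and then invoke the classical primality of such determinantal ideals.

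First I would observe that, for each fixed $i_C\in\mathcal{R}_C$, the generators of $I_{\ind{A}{B}[C]}$ that involve $i_C$ are exactly the $2\times 2$ minors of the matrix $M^{(i_C)}$ whose $(i_A,i_B)$-entry is the linear form $p_{i_A,i_B,i_C,+}$. Partition the tensor coordinates $\{p_{i_1,\dots,i_m}\}$ according to their $C$-index; the form $p_{i_A,i_B,i_C,+}$ is supported on the block with $C$-index $i_C$, and within that block the forms $\{p_{i_A,i_B,i_C,+}\}_{i_A,i_B}$ are linearly independent, being sums over pairwise disjoint sets of coordinates. Hence there is an invertible linear change of coordinates, block diagonal with respect to this partition, sending each $p_{i_A,i_B,i_C,+}$ to a new coordinate $q_{i_A,i_B,i_C}$ and introducing auxiliary coordinates $w_1,\dots,w_N$ that do not occur in $I_{\ind{A}{B}[C]}$ at all. (When the statement is saturated, $D=\emptyset$, no change is needed: $q=p$ and there are no $w$'s.) In the new coordinates, $\mathbb{R}[p]/I_{\ind{A}{B}[C]}\cong\big(\mathbb{R}[q]/J\big)[w_1,\dots,w_N]$, where $J=\sum_{i_C}J_{i_C}$ and $J_{i_C}$ is the ideal of $2\times 2$ minors of the generic matrix $(q_{i_A,i_B,i_C})_{i_A,i_B}$, and the $J_{i_C}$ live in pairwise disjoint sets of variables. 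Since a polynomial ring over a domain is a domain, it suffices to show $J$ is prime.

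Next I would exhibit the monomial map $\psi\colon\mathbb{R}[q]\to\mathbb{R}[s^{(i_C)}_{i_A},\,t^{(i_C)}_{i_B}:\text{all }i_A,i_B,i_C]$ given by $q_{i_A,i_B,i_C}\mapsto s^{(i_C)}_{i_A}t^{(i_C)}_{i_B}$. Its image is a monomial subalgebra of a polynomial ring, hence a domain, so $\ker\psi$ is prime; the content is the classical fact that the $2\times 2$ minors of a generic matrix generate the (toric) ideal of the corresponding Segre variety, together with the observation that, for a product in disjoint variables, the toric ideal is the sum of the individual toric ideals, giving $\ker\psi=J$. Thus $\mathbb{R}[q]/J$ is a domain, $J$ is prime, hence $I_{\ind{A}{B}[C]}$ is prime; and then $V(I_{\ind{A}{B}[C]})$ is irreducible by the theorem relating irreducibility to primality, or directly from the fact that the composite of $\psi$ with the inverse coordinate change is a dominant rational parametrization of the variety.

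The genuine obstacle is the primality of the generic $2\times 2$ determinantal ideal: this is classical but not trivial (it follows from a Gr\"obner-basis/straightening-law argument, or from the toric description), so I would cite it rather than reprove it. Everything else is bookkeeping: checking that the marginal forms are linearly independent and that the coordinate change is block diagonal in the $C$-index — so that $J$ really splits as a sum over pairwise disjoint variable sets — and, because $\mathbb{R}$ is not algebraically closed, noting that the determinantal ideal is absolutely prime (immediate from the monomial description), so that one may equivalently argue via the tensor-product factorization $\mathbb{R}[q]/J\cong\bigotimes_{i_C}\mathbb{R}[q_{\bullet\bullet i_C}]/J_{i_C}$ if one prefers that route.
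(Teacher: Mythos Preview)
Your proposal is correct and follows essentially the same route as the paper: the paper simply states that the proposition ``is a consequence of the fact that general determinantal ideals are prime'' (citing Bruns--Vetter) and alternatively notes that irreducibility of the variety follows from the existence of a rational parametrization. Your argument is a fleshed-out version of exactly this, making explicit the linear coordinate change that turns the marginal linear forms $p_{i_A,i_B,i_C,+}$ into honest variables and the block splitting over $i_C$ that reduces to a product of Segre varieties---details the paper leaves to the reader and the cited reference.
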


Proposition~\ref{prop:prime} is a consequence of the fact that general
determinantal ideals are prime (see \cite{Bruns1988}).
Irreducibility of the variety $V( I_{\ind{A}{B}[C]})$ can also be
deduced from the fact that this variety can be parametrized, for
instance, the set of all probability distributions in $V(
I_{\ind{A}{B}[C]})$ can be realized as the set of probability
distributions in a graphical model.

\begin{example}
A strictly positive joint distribution $p$ of binary random variables
$X_1,X_2,X_3$ satisfies $\ind{1}{3}[2]$ if and only if
\begin{equation}
\label{eq:param}
p_{i_1,i_2,i_3} = s_{i_1,i_2}t_{i_2,i_3}
\end{equation}
for some vectors $(s_{i_1,i_2})_{i_1\in [r_1],i_2\in[r_2]}$,
$(t_{i_2,i_3})_{i_2\in[r_2],i_3\in[r_3]}$ (see Section~1.3 of Part~I).
That is, it lies in the undirected graphical model
\[
X_1 - X_2 - X_3.
\]
Since $V(I_{\ind AB[C]})$ is irreducible, any joint distribution
(possibly with zeros) that satisfies $\ind{1}{3}[2]$ lies in the
closure of the undirected graphical model.  In fact, any such joint
distribution has a parametrization of the form~\eqref{eq:param}, where
$s$ or $t$ also may have zeros.
\end{example}

More interesting than just single statements are combinations of two
or more conditional independence statements.  To determine the classes
of distributions satisfying a collection of independence statements
leads to interesting problems in computational algebra.  Such sets are
typically not irreducible varieties and cannot be parametrized with a
single parametrization.  The first task is to break such a set into
components, and to see if those components have natural
interpretations in terms of conditional independence and can be
parametrized.

\begin{defn}
Let
$\mathcal{C} = \{ \ind{A_1}{B_1}[C_1], \ind{A_2}{B_2}[C_2], \ldots \}$
be a set of conditional independence statements for the random
variables $X_1, X_2, \ldots, X_m$.  The \emph{conditional independence
ideal of $\mathcal{C}$} is the sum of the conditional independence
ideals of the elements of~$\mathcal{C}$:
\[
I_\mathcal{C} = I_{\ind{A_1}{B_1}[C_1]} + I_{\ind{A_2}{B_2}[C_2]} +
\cdots.
\]
\end{defn}
Understanding the probability distributions that satisfy~$\mathcal{C}$
can be accomplished by analyzing an irreducible decomposition
of~$V(I_\mathcal{C})$, which can be obtained from a primary
decomposition of~$I_\mathcal{C}$.

\begin{example}\label{e:pdCItoCI}
Let $X_1$, $X_2$, $X_3$ be binary random variables, and consider
$\mathcal{C} = \{\ind{1}{3}[2], \ind{1}{3}\}$.  The conditional
independence ideal $I_\mathcal{C}$ is generated by three polynomials
of degree $2$:
\begin{multline*}
  I_\mathcal{C} = I_{\ind{1}{3}[2]} + I_{\ind{1}{3}} = \langle
  p_{111}p_{212} - p_{112}p_{211}, p_{121}p_{222} - p_{122}p_{221}, \\
  (p_{111} + p_{121})(p_{212} + p_{222}) - (p_{112} + p_{122})(p_{211}
  + p_{221}) \rangle.
\end{multline*}
The following \textsc{Macaulay2} code asks for the primary
decomposition of this ideal over $\mathbb{Q}$.  It can be shown that
the decomposition is the same over $\mathbb{R}$ and~$\mathbb{C}$.
\begin{verbatim}
loadPackage "GraphicalModels"
S = markovRing (2,2,2)
L = {{{1},{3},{2}}, {{1},{3},{}}}
I = conditionalIndependenceIdeal(S,L)
primaryDecomposition I
\end{verbatim}
This code uses the \textsc{GraphicalModels} package of
\textsc{Macaulay2} which implements many convenient functions to work
with graphical and other conditional independence models.  In
particular, it allows to easily set up the polynomial ring with eight
variables $p_{111},\dots,p_{222}$ with \verb'markovRing' and write out
the equations for $I_\mathcal{C}$ with
\verb'conditionalIndependenceIdeal'.  The command
\verb'primaryDecomposition' is a generic \textsc{Macaulay2} command.
The output of this code consists of two ideals which upon inspection
can be recognized as binomial conditional independence ideals
themselves.  The result is
\[
I_\mathcal{C} = I_{ \ind{\{1,2\}}{3}} \cap I_{ \ind{1}{\{2,3\}}}.
\]
According to Section~\ref{sec:alggeom} this implies a decomposition of
varieties
\[
V(I_\mathcal{C}) = V( I_{ \ind{\{1,2\}}{3}}) \cup V(I_{
\ind{1}{\{2,3\}}}).
\]
On the level of probability distributions, this proves the binary case
of Proposition~\ref{prop:example}.
\end{example}

The general situation may be less favorable than that in
Example~\ref{e:pdCItoCI}.  In particular, the components that appear
need not have interpretations in terms of conditional independence.
The appearing ideals also need not be prime ideals (in general they
are only primary) and it is unclear what this algebraic extra
information may reveal about conditional independence.  For examples
on how to extract information from primary decompositions
see~\cite{Herzog2010,Kahle2014b}.

\subsection{Gaussian random variables}
\label{sec:normal-case}

Algebraic approaches to conditional independence can also be applied
to Gaussian random variables.  Let $X \in
\mathbb{R}^m$ be a nonsingular multivariate Gaussian random vector with
mean $\mu\in\mathbb{R}^m$ and covariance matrix $\Sigma \in PD_m$, the
cone of $m \times m$ symmetric positive definite matrices.  One writes
$X \sim \mathcal{N}(\mu, \Sigma)$.  For subsets $A, B \subseteq [m]$
let $\Sigma_{A, B}$ be the submatrix of $\Sigma$ obtained by
extracting rows indexed by $A$ and columns indexed by $B$, that is
$\Sigma_{A,B} = ( \sigma_{a,b})_{a \in A, b \in B}$.

\begin{prop}
Let $X \sim \mathcal{N}(\mu, \Sigma)$ with $\Sigma \in PD_m$.  Let
$A,B, C \subseteq [m]$ be disjoint subsets.  Then the conditional
independence statement $\ind{A}{B}[C]$ holds if and only if the matrix
$\Sigma_{A \cup C, B \cup C}$ has rank~$\leq \#C$.
\end{prop}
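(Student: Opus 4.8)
The plan is to reduce the Gaussian conditional independence statement to a rank condition on a submatrix of $\Sigma$ by using the standard formula for the conditional covariance. Recall that for a nonsingular Gaussian vector $X \sim \mathcal{N}(\mu, \Sigma)$, the conditional distribution of $X_{A \cup B}$ given $X_C$ is again Gaussian, with conditional covariance matrix given by the Schur complement
\[
\Sigma_{A \cup B, A \cup B} - \Sigma_{A \cup B, C}\,\Sigma_{C,C}^{-1}\,\Sigma_{C, A \cup B}.
\]
Since $X$ is jointly Gaussian, $\ind{A}{B}[C]$ holds if and only if the off-diagonal block of this conditional covariance, namely
\[
\Sigma_{A,B} - \Sigma_{A,C}\,\Sigma_{C,C}^{-1}\,\Sigma_{C,B},
\]
is the zero matrix. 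This is the well-known characterization of conditional independence for Gaussians in terms of partial covariances; I would state it and cite it (or derive it quickly from the conditional density formula).

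The second step is to translate "the Schur complement block vanishes" into "$\Sigma_{A \cup C, B \cup C}$ has rank $\leq \#C$." The key observation is a block factorization: writing the relevant submatrix in block form with the $C$-rows and $C$-columns separated out,
\[
\Sigma_{A \cup C, B \cup C} =
\begin{pmatrix} \Sigma_{A,B} & \Sigma_{A,C} \\ \Sigma_{C,B} & \Sigma_{C,C} \end{pmatrix},
\]
one has the identity
\[
\begin{pmatrix} I & -\Sigma_{A,C}\Sigma_{C,C}^{-1} \\ 0 & I \end{pmatrix}
\begin{pmatrix} \Sigma_{A,B} & \Sigma_{A,C} \\ \Sigma_{C,B} & \Sigma_{C,C} \end{pmatrix}
\begin{pmatrix} I & 0 \\ -\Sigma_{C,C}^{-1}\Sigma_{C,B} & I \end{pmatrix}
=
\begin{pmatrix} \Sigma_{A,B} - \Sigma_{A,C}\Sigma_{C,C}^{-1}\Sigma_{C,B} & 0 \\ 0 & \Sigma_{C,C} \end{pmatrix}.
\]
The two outer matrices are invertible (they are unipotent), and $\Sigma_{C,C}$ is invertible with rank exactly $\#C$ since $\Sigma \in PD_m$. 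Hence the rank of $\Sigma_{A \cup C, B \cup C}$ equals $\#C$ plus the rank of the Schur complement block. Therefore the Schur complement block is zero if and only if $\Sigma_{A \cup C, B \cup C}$ has rank $\leq \#C$ (equivalently, rank exactly $\#C$). Combining with the first step completes the proof.

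The main obstacle, to the extent there is one, is purely bookkeeping: one must be careful that the rows and columns of $\Sigma_{A \cup C, B \cup C}$ are consistently ordered so that the $C$-indexed rows align with the $C$-indexed columns, making the diagonal block $\Sigma_{C,C}$ well-defined and symmetric positive definite. Since $A, B, C$ are disjoint, there is no overlap issue between the $A$-rows and $B$-columns, so the block structure above is genuine. A secondary point worth a sentence is that the rank of a submatrix is invariant under reordering rows and columns, so the rank condition in the statement does not depend on how one orders the elements of $A \cup C$ and $B \cup C$.
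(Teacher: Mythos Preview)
Your proof is correct and follows exactly the approach the paper indicates: the paper does not spell out the argument but merely remarks that the proof ``recognizes $\Sigma_{A \cup C, B\cup C}$ as a Schur complement of a submatrix of $\Sigma$'' and refers to Proposition~3.1.13 of Drton--Sturmfels--Sullivant for the details. You have supplied precisely those details---the conditional covariance formula and the block-triangular factorization yielding $\operatorname{rank}\Sigma_{A\cup C,B\cup C}=\#C+\operatorname{rank}(\Sigma_{A,B}-\Sigma_{A,C}\Sigma_{C,C}^{-1}\Sigma_{C,B})$---so nothing is missing.
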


A proof of this proposition recognizes $\Sigma_{A \cup C, B\cup C}$ as
a Schur complement of a submatrix of $\Sigma$.  The details can be
found in \cite[Proposition~3.1.13]{drton09:_lectur_algeb_statis}.

Just as the rank one condition on a matrix was characterized by the
vanishing of $2~\times~2$ subdeterminants, higher rank conditions on
matrices can also be characterized by the vanishing of
subdeterminants.  Indeed, a basic fact of linear algebra is that a
matrix has rank $\leq r$ if and only if the determinant of every
$(r +1) \times (r + 1)$ submatrix is zero.  This leads to the
conditional independence ideals for multivariate Gaussian random variables.

Let
$\mathbb{R}[\Sigma] := \mathbb{R}[\sigma_{ij} : 1 \leq i \leq j \leq
m]$ be the polynomial ring with real coefficients in the entries of
the symmetric matrix $\Sigma$.

\begin{defn}
  The \emph{Gaussian conditional independence ideal} for the
  conditional independence statement $\ind{A}{B}[C]$ is the ideal
  \[
    J_{\ind{A}{B}[C]} := \langle \text{$(\#C + 1)$-minors of } \Sigma_{A
      \cup C, B \cup C} \rangle.
  \]
  If
  $\mathcal{C} = \{ \ind{A_1}{B_1}[C_1], \ind{A_2}{B_2}[C_2], \ldots,
  \}$ is a collection of conditional independence statements, the
  Gaussian conditional independence ideal is
  \[
    J_\mathcal{C} =  J_{\ind{A_1}{B_1}[C_1]} +  J_{\ind{A_2}{B_2}[C_2]} +
    \cdots.
  \]
\end{defn}

\begin{remark}
A common criterion in statistics
says that, in fact, $\ind{A}{B}[C]$ holds if and only if
$\det(\Sigma_{\{a\}\cup C, \{b\}\cup C})$ vanishes for all~$a\in A$,
$b\in B$.  Since, by assumption, $C$ is non-singular, it is easy to
see that this condition is, in fact, equivalent to the vanishing of
all $(\#C+1)$-minors of~$\Sigma_{A \cup C, B \cup C}$.
\end{remark}

\begin{example}
  Consider the conditional independence statement
  $\ind{2}{\{1,3\}}[4]$.  The ideal $J_{\ind{2}{\{1,3\}}[4]}$ is
  generated by the $2 \times 2$ minors of the matrix
  \[
    \Sigma_{\{2,4\}, \{1,3,4\} }  = 
    \begin{pmatrix}
    \sigma_{12} & \sigma_{23} & \sigma_{24}  \\
    \sigma_{14} & \sigma_{34} & \sigma_{44}
    \end{pmatrix}.
  \]
  Since $\Sigma$ is a symmetric matrix, $\sigma_{ij} = \sigma_{ji}$
  and one always writes $\sigma_{ij}$ with $i \leq j$.  Then
  \[
    J_{\ind{2}{\{1,3\}}[4]}  = \langle 
    \sigma_{12}\sigma_{34} - \sigma_{14}\sigma_{23},
    \sigma_{12}\sigma_{44} - \sigma_{14}\sigma_{24},
    \sigma_{23}\sigma_{44} - \sigma_{34}\sigma_{24} \rangle.
  \]
\end{example}

\begin{example}
Let $X_1$, $X_2$, $X_3$ be jointly Gaussian random variables.  The
conditional independence ideal of
$\mathcal{C} = \{\ind{1}{3}[2], \ind{1}{3}\}$ is
\[
J_\mathcal{C} = J_{\ind{1}{3}[2]} + J_{\ind{1}{3}} = \langle
\sigma_{13} \sigma_{22} - \sigma_{12} \sigma_{23}, \sigma_{13}
\rangle.
\]
Straightforward manipulations of these ideals show 
\[
J_\mathcal{C} = \langle  \sigma_{13} \sigma_{22} - \sigma_{12} \sigma_{23}, \sigma_{13}  \rangle = 
\langle \sigma_{12} \sigma_{23}, \sigma_{13}  \rangle
=  \langle \sigma_{12}, \sigma_{13} \rangle \cap \langle \sigma_{23}, \sigma_{13} \rangle 
\]
\[
=  J_{ \ind{\{1,2\}}{3}}  \cap J_{ \ind{1}{\{2,3\}}}.
\]
This last primary decomposition proves the Gaussian case of
Proposition~\ref{prop:example}.
\end{example}

\subsection{The contraction axiom}
\label{sec:contraction}

When computing the decompositions of conditional independence ideals,
there might be components that are ``uninteresting" from the
statistical standpoint.  These components might not intersect the
region of interest in probabilistic applications (e.g.~they might miss
the probability simplex or the cone of positive definite matrices) or
they might have non-trivial intersections but that intersection is
contained in some other component.

\begin{example} Let $X = (X_1, X_2, X_3)$ be a multivariate Gaussian
random vector.  The conditional independence ideal of
$\mathcal{C} = \{ \ind{1}{2}[3], \ind{2}{3} \}$ is
\[
J_\mathcal{C}  =  \langle \sigma_{12}\sigma_{33} - \sigma_{13}\sigma_{23} , \sigma_{23} \rangle
\]
which has primary decomposition
\[
J_\mathcal{C} = \langle \sigma_{12}\sigma_{33} , \sigma_{23} \rangle =
\langle \sigma_{12} , \sigma_{23} \rangle \cap \langle \sigma_{33} ,
\sigma_{23} \rangle.
\]
This decomposition shows that
\[
V(J_\mathcal{C}) =  V(\langle \sigma_{12}  , \sigma_{23} \rangle) \cup 
V(\langle \sigma_{33}  , \sigma_{23} \rangle).
\]
However, the second component does not intersect the positive definite
cone, because $\sigma_{33}> 0$ for all $\Sigma \in PD_3$.  The first
component is the conditional independence ideal $J_{ \ind{1,3}{2}}$.
From this decomposition it is visible that $\ind{1}{2}[3]$ and
$\ind{2}{3}$ imply that $\ind{1,3}{2}$.  This implication is called
the \emph{contraction axiom}.  See Section~1.5 of Part~I for other CI
axioms.
\end{example}

The contraction axiom also holds for non-Gaussian random variables.
For discrete random variables, it can again be checked algebraically.
The primary decomposition associated to the discrete contraction axiom
is worked out in detail in \cite{Garcia2005}.  The next examples
discusses the binary case as an illustration:
\begin{example}
Let $X_1, X_2, X_3$ be binary random variables.  The conditional
independence ideal of $\mathcal{C} = \{ \ind{1}{2}[3], \ind{2}{3} \}$
is
\begin{align*}
  I_\mathcal{C} &= \big\langle p_{111}p_{221} - p_{121}p_{211},
                    p_{112}p_{222} - p_{122}p_{212}, \\
  & \qquad (p_{111} + p_{211}) (p_{122} + p_{222})  - (p_{112} + p_{212}) (p_{121} + p_{221}) 
  \big\rangle
\end{align*}
which has primary decomposition
\begin{align*}
I_\mathcal{C}   =  I_{\ind{1,3}{2}}  & \cap 
\langle p_{122} + p_{222}, p_{112} + p_{212}, p_{121} p_{211} - p_{111}p_{221}
                 \rangle \\
  & \cap
\langle   
p_{121} + p_{221}, p_{111} + p_{211}, p_{122} p_{212} - p_{112}p_{222}
\rangle.
\end{align*}
The intersection of the second component with the probability simplex
forces that $p_{122} = p_{222} = p_{112} = p_{212} = 0$.  This in turn
implies that
\[
V(\langle   
p_{122} + p_{222}, p_{112} + p_{212}, p_{121} p_{211} - p_{111}p_{221}
\rangle)  \cap  \Delta_7   \subseteq V( I_{\ind{1,3}{2}})
\]
A similar argument holds for the third component.  So although the
variety $V(I_\mathcal{C})$ has three components, only one of them is
statistically meaningful:
\[
V(I_\mathcal{C}) \cap \Delta_7  =  V(I_{\ind{1,3}{2}}) \cap  \Delta_7 .
\]
\end{example}

%%%%%%%%%%%%%%%%%%%%%%%%%%%%%%%%%%%%%%%%%%%%%%%%%%%%%
%%%%%%%%%%%%%%%%%%%%%%%%%%%%%%%%%%%%%%%%%%%%%%%%%%%%%
%%%%%%%%%%%%%%%%%%%%%%%%%%%%%%%%%%%%%%%%%%%%%%%%%%%%%

\section{Examples of Decompositions of Conditional Independence Ideals}
\label{sec:examples}

This section studies some examples of families of conditional
independence statements and how algebraic tools can be used to
understand them.  The first example is a detailed study of the
intersection axiom, and the second example concerns the conditional
independence statements associated to the $4$-cycle graph.

\subsection{The intersection axiom}
\label{sec:intersection-axiom}

The \emph{intersection axiom} (see {\bf  Section 1.5})  is the following implication of
conditional independence statements:
\begin{equation}\label{eq:IntersectionAxiom}
\ind{A}{B}[C\cup D] \text{ and } \ind{A}{C}[B \cup D]
\Rightarrow
\ind{A}{B\cup C}[D].
\end{equation}
This implication is valid for strictly positive probability
distributions.  Algebraic techniques can be used to study how its
validity extends beyond this.

The question about the primary decomposition of the ideal(s)
$I_{\{\ind{A}{B}[C\cup D],\ind{A}{C}[B \cup D]\}}$ was first asked
in~\cite[Chapter~6.6]{drton09:_lectur_algeb_statis}, and the answer is
due to~\cite{Fink2011}.  Grouping variables if necessary, one can
assume $A=\{1\}$, $B=\{2\}$, $C=\{3\}$.  Moreover, let~$D=\emptyset$.
From this one can always recover the general case by adding
conditioning constraints.

\begin{prop}[Proposition~1 in \cite{Fink2011}]
\label{prop:CEF-theorem}
The ideal $I_{\{\ind{X_1}{X_2}[X_3],\ind{X_1}{X_3}[X_2]\}}$ is
radical, that is, its irredundant primary decompositions consists only
of prime ideals.  These minimal primes correspond to pairs of
partitions $[r_2]=A_1\cup\dots\cup A_s$, $[r_3]=B_1\cup\dots\cup B_s$
of the same size.  The minimal prime $P$ corresponding to two
partitions is
\begin{multline*}
P = \big\langle p_{i_1i_2i_3} : i_1\in [r_1], i_2\in A_j, i_3\in B_k\text{ for some } j\neq k \big\rangle
\\
 + \big\langle p_{i_1i_2i_3}p_{i'_1i'_2i'_3} -
 p_{i_1i'_2i'_3}p_{i'_1i_2i_3} : i_1,i'_1\in [r_1], i_2,i_2'\in A_j, i_3,i_3'\in B_j\text{ for some }j\big\rangle.
\end{multline*}
\end{prop}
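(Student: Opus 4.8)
The plan is to establish all three assertions of the proposition — that each listed $P$ is prime, that the $P$ are exactly the minimal primes of $I:=I_{\{\ind{X_1}{X_2}[X_3],\ind{X_1}{X_3}[X_2]\}}$, and that $I$ is radical — in four steps: (i) verify $I\subseteq P$ for every admissible pair of partitions, so that $\bigcup_P V(P)\subseteq V(I)$; (ii) prove the reverse inclusion $V(I)\subseteq\bigcup_P V(P)$ by analyzing the support of an arbitrary point of $V(I)$; (iii) show there are no strict inclusions among the irreducible varieties $V(P)$, so that $\bigcup_P V(P)$ is the irreducible decomposition of $V(I)$, the $P$ are precisely the minimal primes, and $\sqrt I=\bigcap_P P$ by the results of Section~\ref{sec:alggeom}; and (iv) upgrade $\sqrt I=\bigcap_P P$ to $I=\bigcap_P P$.

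For step~(i), one observes that $V(P)$ is the set of tensors supported on the ``block'' region $\bigcup_j [r_1]\times A_j\times B_j$ whose restriction to each block $[r_1]\times A_j\times B_j$, viewed as an $r_1\times(\#A_j\,\#B_j)$ matrix, has rank $\le 1$. After forgetting the coordinates that $P$ forces to vanish, $V(P)$ becomes a direct product of affine cones over Segre varieties, each irreducible because it has a dense outer-product parametrization; hence $V(P)$ is irreducible, and $\Bbbk[p]/P$ is a tensor product of the (domain) coordinate rings of these cones, so $P$ is prime. The inclusion $I\subseteq P$ is then a short case analysis: in a generator $p_{i_1i_2i_3}p_{i_1'i_2'i_3}-p_{i_1i_2'i_3}p_{i_1'i_2i_3}$ of $I_{\ind{X_1}{X_2}[X_3]}$, if the block of $i_2$ differs from that of $i_3$ then both monomials contain a coordinate lying in the monomial part of $P$; likewise if the block of $i_2'$ differs from that of $i_3$; and otherwise $i_2,i_2'$ and $i_3$ all lie in one matched pair of blocks, so the generator is exactly one of the listed $2\times2$ binomials of $P$. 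The argument for $I_{\ind{X_1}{X_3}[X_2]}$ is symmetric.

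For step~(ii), fix $p\in V(I)$. For each $i_3$ the slice $(p_{i_1i_2i_3})_{i_1,i_2}$ has vanishing $2\times2$ minors, hence rank $\le 1$, so it is zero or an outer product and in particular has rectangular support; likewise each slice $(p_{i_1i_2i_3})_{i_1,i_3}$. Build the bipartite graph on $[r_2]\sqcup[r_3]$ with an edge $i_2\,i_3$ whenever $p_{i_1i_2i_3}\ne 0$ for some $i_1$. On a connected component $A\sqcup B$ with both sides nonempty, pick a spanning tree rooted at some $i_2^*\in A$: the $i_2^*$-slice is $\alpha\otimes\gamma$ with $\alpha\in\Bbbk^{r_1}$ nonzero, and propagating along the tree — each further $X_2$- or $X_3$-slice being rank one and overlapping a slice already known to be a multiple of $\alpha$ — shows $p_{i_1i_2i_3}=\alpha_{i_1}c_{i_2i_3}$ on $[r_1]\times A\times B$, i.e.\ that block has rank $\le 1$. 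Indices lying in a one-sided component carry only zero slices. Taking the full components as the matched blocks and distributing the remaining indices of $[r_2]$ and of $[r_3]$ among them arbitrarily (or, if there are no full components, i.e.\ $p=0$, the one-block partitions) yields an admissible pair of partitions with $p\in V(P)$.

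Finally, steps~(iii) and~(iv). Since each $V(P)$ is irreducible, the irreducible components of $V(I)$ are the maximal elements of $\{V(P)\}$; comparing the generic support and flattening rank of a point of $V(P)$ shows that $V(P)\subseteq V(P')$ would force the $[r_2]$- and $[r_3]$-partitions of $P'$ to coarsen those of $P$ compatibly with the pairings, whereupon the constraint that paired partitions have equally many parts forces $P=P'$. Hence every $P$ is a minimal prime of $I$, $\bigcup_P V(P)$ is the irreducible decomposition of $V(I)$, and $\sqrt I=\bigcap_P P$. The remaining, and hardest, point is that $I$ is \emph{already} radical — a sum of two prime ideals rarely is. The plan here is to produce a term order under which the natural binomial generators of $I$, augmented if necessary by a few binomials arising from the S-pairs between a generator of $I_{\ind{X_1}{X_2}[X_3]}$ and one of $I_{\ind{X_1}{X_3}[X_2]}$, form a Gröbner basis whose leading terms are squarefree; the S-pairs internal to either determinantal summand reduce to zero by the classical fact that the $2\times2$ minors of a generic matrix form a Gröbner basis, and an ideal with a squarefree initial ideal is radical. (Alternatively one can argue directly from the structure theory of pure-difference binomial ideals.) Combined with step~(iii), this gives $I=\bigcap_P P$ and completes the proof; I expect step~(iv) to be the main obstacle.
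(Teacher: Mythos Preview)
The paper does not actually prove this proposition: it is stated as a citation of Fink~2011, with only the remark that Fink's original formulation is in terms of complete bipartite graphs.  So there is no in-paper proof to compare against, and your proposal must be judged on its own.

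Steps~(i)--(iii) of your outline are sound.  The bipartite-graph/connected-component analysis in step~(ii) is exactly the mechanism behind Fink's result: the edges of your bipartite graph on $[r_2]\sqcup[r_3]$ form a disjoint union of complete bipartite pieces $A_j\times B_j$, which is the ``complete bipartite graph'' language the paper alludes to.  The incomparability argument in step~(iii) also goes through once one observes that a generic point of $V(P)$ has support exactly $\bigcup_j [r_1]\times A_j\times B_j$, forcing any $P'$ with $V(P)\subseteq V(P')$ to coarsen both partitions compatibly, while the rank constraint then prevents two distinct blocks of $P$ from merging into a single block of~$P'$.

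The genuine gap is step~(iv), and you already identify it as such.  The squarefree-initial-ideal strategy is the right one and does succeed here, but it is not automatic: the cross $S$-pairs between a generator of $I_{\ind{X_1}{X_2}[X_3]}$ and one of $I_{\ind{X_1}{X_3}[X_2]}$ do not all reduce to zero against the original quadrics, and one must explicitly name the term order, identify the extra binomials that complete the Gr\"obner basis, and verify that every leading term remains squarefree.  Fink's paper (and the closely related binomial-edge-ideal arguments of Herzog et al.\ and Rauh that the paper also cites) carry this out in detail.  Until that computation is done --- or replaced by a direct combinatorial proof that $\bigcap_P P\subseteq I$ --- your proposal establishes the irreducible decomposition of $V(I)$ but not the radicality of~$I$, and so remains a plan rather than a proof.
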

The paper \cite{Fink2011} uses a different formulation in terms of
complete bipartite graphs.  It can be seen that our formualation is
equivalent.

To give a statistical interpretation to
Proposition~\ref{prop:CEF-theorem}, whenever the joint distribution of
$X_1,X_2,X_3$ lies in the prime~$P$ corresponding to the two
partitions $[r_2]=A_1\cup\dots\cup A_s$, $[r_3]=B_1\cup\dots\cup B_s$,
construct a random variable $B$ as follows: put $B := j$ whenever
$X_2\in A_j$ and~$X_3\in B_j$.  Thus, $B$ is uniquely defined except
on a set of measure zero, since $P(X_2\in A_j,X_3\in B_k)=0$ for
$j\neq k$, which follows from the containment of monomials in~$P$.
The variable~$B$ specifies in which blocks of the two partitions the
random variables $X_2$ and~$X_3$ lie.  Now the binomials in~$P$ imply
that $\ind{X_1}{\{X_2,X_3\}}[B]$.
\begin{cor}
\label{cor:intersection}
Suppose that $X_1,X_2,X_3$ satisfy $\ind{X_1}{X_2}[X_3]$ and
$\ind{X_1}{X_3}[X_2]$.  Then there is a random variable~$B$ that
satisfies:
\begin{enumerate}
\item $B$ is a (deterministic) function of~$X_2$;
\item $B$ is a (deterministic) function of~$X_3$;
\item $\ind{X_1}{\{X_2,X_3\}}[B]$.
\end{enumerate}
Conversely, whenever there exists a random variable~$B$ with
properties 1.\ to 3., the random variables $X_1,X_2,X_3$ satisfy
$\ind{X_1}{X_2}[X_3]$ and $\ind{X_1}{X_3}[X_2]$.
\end{cor}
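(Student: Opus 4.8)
The plan is to derive Corollary~\ref{cor:intersection} directly from the primary decomposition in Proposition~\ref{prop:CEF-theorem}, using the statistical reading of the minimal primes already sketched in the paragraph preceding the corollary. First I would observe that $X_1,X_2,X_3$ satisfy both $\ind{X_1}{X_2}[X_3]$ and $\ind{X_1}{X_3}[X_2]$ precisely when the joint distribution~$p$ lies in $V(I_{\{\ind{X_1}{X_2}[X_3],\ind{X_1}{X_3}[X_2]\}})$. By Proposition~\ref{prop:CEF-theorem} together with the last part of Proposition~\ref{prop:cupcap}, this variety is the union of the varieties $V(P)$ over the minimal primes~$P$ indexed by pairs of partitions $[r_2]=A_1\cup\dots\cup A_s$, $[r_3]=B_1\cup\dots\cup B_s$ of equal size. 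So $p\in V(P)$ for at least one such pair of partitions, and it suffices to treat that single prime.

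Next, fix the pair of partitions for which $p\in V(P)$ and define the random variable~$B$ exactly as in the paragraph before the corollary: set $B:=j$ whenever $X_2\in A_j$ and $X_3\in B_j$. The monomial generators of~$P$ force $p_{i_1i_2i_3}=0$ whenever $i_2\in A_j$ and $i_3\in B_k$ with $j\neq k$, so with probability one the indices of $X_2$ and $X_3$ lie in blocks with the same label; hence~$B$ is well defined almost surely, and by construction it is a deterministic function of~$X_2$ and also a deterministic function of~$X_3$ — this gives properties (1) and (2). For property (3) I would translate the binomial generators of~$P$: within the event $\{B=j\}$, i.e.\ $X_2\in A_j$, $X_3\in B_j$, the binomials $p_{i_1i_2i_3}p_{i'_1i'_2i'_3}-p_{i_1i'_2i'_3}p_{i'_1i_2i_3}$ say precisely that the matrix of conditional probabilities $P(X_1=i_1,\,(X_2,X_3)=(i_2,i_3)\mid B=j)$ factors as an outer product in the $X_1$-index versus the $(X_2,X_3)$-index, which is the rank-one condition characterizing $\ind{X_1}{\{X_2,X_3\}}[B=j]$; ranging over~$j$ yields $\ind{X_1}{\{X_2,X_3\}}[B]$.

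For the converse, suppose such a~$B$ exists. Properties (1) and (2) mean $B$ partitions the support of $X_2$ into blocks $A_j=\{i_2:B=j \text{ when } X_2=i_2\}$ and similarly the support of $X_3$ into blocks $B_j$, and these blocks are matched by the common label~$j$ (measure-zero events aside); property (3) says that conditionally on each block, $X_1$ is independent of the pair $(X_2,X_3)$. From this one checks that $p$ satisfies both $\ind{X_1}{X_2}[X_3]$ and $\ind{X_1}{X_3}[X_2]$ — either by verifying that $p$ lies in the prime~$P$ attached to the partitions $\{A_j\},\{B_j\}$ and invoking Proposition~\ref{prop:CEF-theorem} again, or directly: conditioning further on $X_3=i_3$ (which pins down the block~$j$ and hence~$B$) and using (3) gives $\ind{X_1}{X_2}[X_3]$, and symmetrically for the other statement.

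The main obstacle I anticipate is purely bookkeeping rather than conceptual: one must be careful about the almost-everywhere qualifiers (the label of~$B$ is ambiguous on a null set where the ``off-diagonal'' cells vanish, and $B$ need not be surjective onto $\{1,\dots,s\}$ if some blocks carry zero probability), and one must correctly match the block structure coming from an abstractly-given~$B$ in the converse with the combinatorial data $(A_j,B_j)$ of Proposition~\ref{prop:CEF-theorem}. Making the dictionary between the ideal-theoretic statement (containment in a specific minimal prime) and the probabilistic statement (existence of the function~$B$) fully precise in both directions is where the care is needed; the rank-one/outer-product reinterpretation of the binomial generators, block by block, is the key step that does the real work.
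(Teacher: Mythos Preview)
Your proposal is correct and follows essentially the same approach as the paper: the paper does not give a separate formal proof of the corollary but derives it from Proposition~\ref{prop:CEF-theorem} via exactly the statistical interpretation spelled out in the paragraph immediately preceding the corollary, which you explicitly build on and flesh out (including the converse direction, which the paper states without further comment). Your added care about null sets and the block bookkeeping is appropriate and does not depart from the paper's line of argument.
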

The case where $B$ is a constant corresponds to the CI statement
$\ind{X_1}{\{X_2,X_3\}}$.  The intersection axiom can be recovered by
noting that a function $B$ that is a function of only $X_{2}$ as well
as a function of only $X_{3}$ is necessarily constant, if the joint
distribution of $X_{2}$ and $X_{3}$ is strictly positive.

Similar results hold for all families of CI statements of the form
$\ind{A}{B_i}[C_i]$, where $A\cup B_i\cup C_i=V$ and where $A$ is
fixed for all statements,
see~\cite{Rauh13:Binomial_edge_ideals,RauhAy14:Robustness_and_systems_design}
(the case where $X_A$ is binary was already described
in~\cite{Herzog2010}).  The corresponding CI ideal is still radical,
and the minimal primes have a similar interpretation.  However,
finding the minimal primes is more difficult and involves solving a
combinatorial problem.
Finally, Corollary~\ref{cor:intersection} can be generalized to
continuous random variables \cite{peters2015intersection}.

\subsection{The four-cycle}
\label{sec:C4}

Consider four discrete random variables $X_1,X_2,X_3,X_4$ and the
(undirected) graphical model of the four cycle $C_4=(V,E)$ with edge
set $E=\{(1,2),(2,3),(3,4),(1,4)\}$.  The global Markov CI statements
of this graph are
\begin{equation*}
\operatorname{global}(C_4) = \{
\ind{1}{3}[\{2,4\}]
,\;
\ind{2}{4}[\{1,3\}] \}.
\end{equation*}
The primary decomposition of the corresponding CI ideal was studied
in~\cite{Kahle2014b} in the case where $X_1$ and $X_3$ are binary.
The case that all variables are binary is as follows:
\begin{prop}[Theorem~5.6 in \cite{Kahle2014b}]
The minimal primes of the CI ideal $I_{\operatorname{global}(C_4)}$ of
the binary four cycle are the toric ideal $I_{C_4}$ and the monomial
ideals
\begin{equation*}
P_i = \langle p_x : x_i = x_{i+1} \rangle,
\qquad
P'_i = \langle p_x : x_i \neq x_{i+1} \rangle
\quad\text{for $1\le i<4$.}
\end{equation*}
\end{prop}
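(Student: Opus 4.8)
The plan, following \cite{Kahle2014b}, is to establish the decomposition of varieties
\[
V(I_{\operatorname{global}(C_4)}) = V(I_{C_4}) \cup \bigcup_{i} V(P_i) \cup \bigcup_{i} V(P_i')
\]
and then to check that each variety on the right is an irreducible component of $V(I_{\operatorname{global}(C_4)})$, i.e.\ that none of them is contained in another. Every listed ideal is prime: each $P_i$ and $P_i'$ is generated by a subset of the coordinate variables $\{p_x : x \in \{1,2\}^4\}$, so its quotient is a polynomial ring; and $I_{C_4}$ is toric, hence prime, being the kernel of the monomial parametrization $p_x \mapsto \theta^{12}_{x_1x_2}\theta^{23}_{x_2x_3}\theta^{34}_{x_3x_4}\theta^{14}_{x_1x_4}$ of the binary four-cycle model. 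It helps to organise everything around the symmetry group generated by the automorphisms of $C_4$ and the state relabellings $x_j \mapsto 3-x_j$: it fixes $I_{C_4}$ and acts transitively on $\{P_i, P_i'\}$, so up to symmetry only two components have to be analysed.

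The easy direction is that each listed variety is contained in $V(I_{\operatorname{global}(C_4)})$. Recall that $I_{\operatorname{global}(C_4)}$ is generated by eight binomials: the four $p_{1a1b}p_{2a2b} - p_{1a2b}p_{2a1b}$ ($a,b\in\{1,2\}$) from $\ind{1}{3}[\{2,4\}]$ and the four analogous $p_{c1d1}p_{c2d2} - p_{c1d2}p_{c2d1}$ ($c,d\in\{1,2\}$) from $\ind{2}{4}[\{1,3\}]$. For $I_{C_4}$ one substitutes the monomial parametrization and observes that the two $\theta$-monomials obtained from each of the eight binomials coincide; by the rotation symmetry one such check suffices. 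For $P_i$ and $P_i'$ a direct inspection shows that each of the two degree-two monomials $p_u p_v$ occurring in any generator involves, for every edge $i$, one configuration lying on the diagonal $\{x_i = x_{i+1}\}$ and one lying off it; hence every generator vanishes identically on the coordinate subspace $V(P_i)$ and on $V(P_i')$. This inspection simultaneously shows that the $V(P_i)$ and $V(P_i')$ are pairwise distinct genuine components: they are coordinate subspaces with pairwise incomparable supports, none is contained in $V(I_{C_4})$ since $V(I_{C_4})$ contains full-support tensors such as the uniform distribution, and conversely $I_{C_4}$ is not contained in $P_i$ because $I_{C_4}$ contains a binomial of degree four involving only the eight variables $p_x$ with $x_i \neq x_{i+1}$, coming from a lattice relation among the $\theta$-images of those eight configurations.

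The substantial step is the reverse inclusion: every $p$ satisfying the eight binomial equations lies in one of the listed varieties. Here I would use the structure theory of binomial ideals \cite{Eisenbud1996}: each minimal prime of $I := I_{\operatorname{global}(C_4)}$ has the form $\langle p_x : x \notin T\rangle + L_T$ for a ``support'' $T \subseteq \{1,2\}^4$ and a toric ideal $L_T$ in the variables $\{p_x : x\in T\}$, and such a $T$ can occur only if, for each generator $p_up_v - p_wp_z$, one has $\{u,v\}\subseteq T$ if and only if $\{w,z\}\subseteq T$. One classifies these admissible supports, a finite analysis kept short by the symmetry. If $T \subseteq \{x : x_i = x_{i+1}\}$ (resp.\ $T\subseteq\{x : x_i\neq x_{i+1}\}$) for some edge $i$, the corresponding component lies in $V(P_i')$ (resp.\ $V(P_i)$) and contributes nothing new, and the key combinatorial claim is that every admissible support not of this type is all of $\{1,2\}^4$. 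For the full-support cell one must identify the associated prime with $I_{C_4}$, i.e.\ show $(I : (\prod_x p_x)^{\infty}) = I_{C_4}$; since $I \subseteq I_{C_4}$ and $I_{C_4}$ is prime and contains no $p_x$, only $\supseteq$ is at issue, and after inverting all coordinates this reduces to the lattice statement that the sublattice $L\subseteq\mathbb{Z}^{16}$ spanned by the exponent-difference vectors of the eight generators is saturated and has the same rank as $\ker A$, where $A$ is the integer matrix whose kernel is the defining lattice of $I_{C_4}$; since $L\subseteq\ker A$ always, this forces $L = \ker A$. (Alternatively, on the nonvanishing locus one can run a multiplicative Hammersley--Clifford argument adapted to $C_4$: the generators say that certain $2\times 2$ slices of $p$ have rank one, and splicing these rank-one factorizations around the cycle produces the edgewise factorization $p_x = \theta^{12}_{x_1x_2}\theta^{23}_{x_2x_3}\theta^{34}_{x_3x_4}\theta^{14}_{x_1x_4}$.)

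The main obstacle is precisely this last block: the combinatorial bookkeeping ruling out spurious admissible supports strictly between a single-edge diagonal and the whole set, together with the lattice computation verifying that the generic cell yields exactly $I_{C_4}$ and nothing larger — this is where the particular combinatorics of the four-cycle, rather than that of a general graph, enters. A small \textsc{Macaulay2} computation in \texttt{markovRing(2,2,2,2)} is useful both as a guide to the hand argument and to confirm that the primary decomposition computed over $\mathbb{Q}$ is field-independent, which here is immediate because all the listed primes are generated by binomials with coefficients in $\{0,1,-1\}$.
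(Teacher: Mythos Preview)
The paper does not contain a proof of this proposition: it is quoted verbatim as Theorem~5.6 of \cite{Kahle2014b} and then used as input for Proposition~\ref{prop:binary-C4}, so there is no in-paper argument to compare against. Your outline is in the spirit one expects from \cite{Kahle2014b}: invoke the Eisenbud--Sturmfels structure theory for binomial ideals, use the dihedral and state-relabelling symmetry to reduce the classification of admissible supports $T\subseteq\{1,2\}^4$ to a small case analysis, and identify the full-support cell with $I_{C_4}$ via the saturation $(I:(\prod_x p_x)^\infty)$.

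As written, however, this is a proof plan rather than a proof. The two steps you label the ``main obstacle'' are exactly where the theorem's content lives, and neither is carried out: you do not actually enumerate the admissible supports and show that any $T$ not contained in some $\{x:x_i=x_{i+1}\}$ or $\{x:x_i\neq x_{i+1}\}$ must be all of $\{1,2\}^4$, and you do not verify the lattice equality $L=\ker A$ (equivalently, that the saturation by $\prod_x p_x$ is $I_{C_4}$). The containments and non-redundancy checks you do spell out are fine, and your observation that $I_{C_4}$ contains a degree-four binomial supported on $\{p_x:x_i\neq x_{i+1}\}$ is correct and gives the right reason why $V(P_i)\not\subseteq V(I_{C_4})$. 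But if this is meant to stand as a proof, the support classification and the lattice computation have to be written down; the closing appeal to a \textsc{Macaulay2} run is a sanity check, not a substitute.
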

The ideal $I_{C_4}$ equals the vanishing ideal of the graphical model,
to be discussed in Section~\ref{sec:vanishing}.  Interestingly, in
this primary decomposition, all ideals except $I_{C_4}$ are monomial
ideals; that is, they only give support restrictions on the
probability distribution.

The primary decomposition of the CI ideal gives an irreducible
decomposition of the corresponding set of probability distributions.
This leads to the statement of Proposition~\ref{prop:binary-C4} in the
introduction.

When $X_2$ and $X_4$ are not binary, the decomposition of
$I_{\operatorname{global}(C_4)}$ involves prime ideals parametrized by
$i\in\{2,4\}$ and two sets $\emptyset \neq C,D \subsetneq [r_i]$.  For
such a choice of $i,C,D$, let $j$ denote the element of
$\{2,4\}\setminus\{i\}$, and let
\begin{multline*}
P_{i,C,D} = \langle p_{1x_21x_4} : x_i \in C, x_{j} \in [r_{j}] \rangle
+ \langle p_{1x_22x_4} : x_i \in D, x_{j} \in [r_{j}] \rangle \\
+ \langle p_{2x_21x_4} : x_i \notin D, x_{j} \in [r_{j}] \rangle +
\langle p_{2x_22x_4} : x_i \notin C, x_{j} \in [r_{j}] \rangle
+ I_{\operatorname{global}(C_4)}
\end{multline*}
the result is the following:
\begin{prop}[Theorem~6.5 in~\cite{Kahle2014b}]
Let $X_1$ and $X_3$ be binary random variables.  The minimal primes of
the CI ideal $I_{\operatorname{global}(C_4)}$ of the four cycle are
the toric ideal $I_{C_4}$ and the ideals $P_{i,C,D}$ for $i\in\{2,4\}$
and $\emptyset \neq C,D\subsetneq [r_i]$.  Furthermore the ideal is
radical and thus equals the intersection of its minimal primes.
\end{prop}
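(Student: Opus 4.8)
We outline the strategy. The plan is to exploit that $I:=I_{\operatorname{global}(C_4)}$ is a \emph{binomial} ideal: both $\ind{1}{3}[\{2,4\}]$ and $\ind{2}{4}[\{1,3\}]$ are saturated CI statements, so by the discussion following Definition~\ref{d:generalCIideal} their CI ideals --- and hence the sum $I$ --- are generated by pure-difference binomials $p_xp_y-p_zp_w$. This brings to bear the structure theory of binomial ideals \cite{Eisenbud1996}: over an algebraically closed field, every associated prime of a binomial ideal has the form $I_L+\langle p_x:x\notin\sigma\rangle$, where $\sigma\subseteq[2]\times[r_2]\times[2]\times[r_4]$ is a cell and $I_L$ is the lattice ideal of the binomials supported on $\sigma$; since all our binomials are pure differences, the partial characters are trivial and these pieces are honest toric (lattice) ideals. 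The proof then splits into: (a) classifying the cells $\sigma$ that carry a minimal prime; (b) identifying the toric piece $I_L$ on each; and (c) proving $I$ radical, hence equal to the intersection of those minimal primes (the rationality of the primes over $\mathbb{Q}$ lets one descend from $\mathbb{C}$ to $\mathbb{R}$ and $\mathbb{Q}$).

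For (a) one determines the achievable supports. Because $X_1$ and $X_3$ are binary, $\ind{1}{3}[\{2,4\}]$ forces, for each fixed $(x_2,x_4)$, that the $2\times 2$ array $(p_{i_1x_2i_3x_4})_{i_1,i_3}$ have rank $\le 1$; the supports of a rank-$\le 1$ binary matrix are exactly $\varnothing$, a single entry, a single row, a single column, or all four entries. Meanwhile $\ind{2}{4}[\{1,3\}]$ forces, for each $(x_1,x_3)$, that the $r_2\times r_4$ slice $(p_{x_1x_2x_3x_4})_{x_2,x_4}$ have rank $\le 1$, hence that its support be a combinatorial rectangle $R_{x_1x_3}\times S_{x_1x_3}$. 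I would run the finite combinatorial classification of the quadruples $\{R_{x_1x_3}\times S_{x_1x_3}\}_{(x_1,x_3)\in[2]^2}$ compatible with the outer rank conditions --- organized by how many of the four slices are nonempty and how they overlap in the $(x_2,x_4)$-directions --- and extract the inclusion-maximal admissible supports. The claim to be verified is that these are exactly the full support (carrying the toric ideal $I_{C_4}$) and, for each $i\in\{2,4\}$ and each pair of proper nonempty subsets $C,D\subsetneq[r_i]$, the support recorded by $P_{i,C,D}$; writing $j$ for the remaining index, there the four slices are supported on the $x_i$-rows in $[r_i]\setminus C$, $[r_i]\setminus D$, $D$, $C$ respectively (with all $x_j$-columns present), and one checks directly --- splitting on whether $x_i$ lies in $C\setminus D$, $D\setminus C$, $C\cap D$, or neither --- that the induced $2\times 2$ slice-incidence pattern is always a row, a column, or all four entries, so the support is indeed admissible.

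For (b) and (c): on a maximal admissible cell $\sigma$, the surviving binomials of $I$ generate a lattice ideal, and I would check the corresponding lattice is saturated, so that this ideal is prime --- which both pins down $I_L$ and shows each candidate component is genuinely prime. On the full cell this toric ideal is the vanishing ideal $I_{C_4}$ of the graphical model; on the cell of $P_{i,C,D}$ it is the restriction of the $C_4$ toric relations to that support, giving precisely $P_{i,C,D}$. To prove radicality I would argue $I$ is cellular with each cellular component radical --- for instance via the cell-wise radicality criterion of \cite{Eisenbud1996}, or by exhibiting a squarefree initial ideal for each cellular piece --- so that there are no embedded primes and $I=\bigcap P$ over the listed minimal primes; equivalently, compute $I_{C_4}\cap\bigcap_{i,C,D}P_{i,C,D}$ directly and check it is generated by the two families of CI generators. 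The main obstacle is step (a): the enumeration of maximal admissible supports must be set up and carried through carefully, and this is where the hypothesis that $X_1$ and $X_3$ are binary is genuinely used; once that list is fixed, identifying the toric pieces and verifying radicality are comparatively mechanical.
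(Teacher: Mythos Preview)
The paper does not give its own proof of this proposition; it simply records it as Theorem~6.5 of \cite{Kahle2014b} and moves on. So there is nothing in the chapter itself to compare your proposal against.

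That said, your outline is the right shape and is essentially the approach taken in the cited source: exploit that $I_{\operatorname{global}(C_4)}$ is a pure-difference binomial ideal, invoke the Eisenbud--Sturmfels structure theory to reduce the determination of associated primes to a combinatorial classification of admissible cells, identify the lattice ideal on each maximal cell, and verify radicality. Your reading of the support of $P_{i,C,D}$ from its monomial part is correct, and your observation that the binary hypothesis on $X_1,X_3$ is precisely what makes the slice-by-slice support analysis in step~(a) tractable is exactly the point of that hypothesis.

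What you have written is an honest strategy sketch rather than a proof. The combinatorial classification in step~(a) --- showing that the listed supports are \emph{exactly} the inclusion-maximal admissible ones, with no others and no redundancies --- is asserted but not carried out, and the radicality argument in~(c) is left at ``I would check''. These two steps are where the actual work in \cite{Kahle2014b} lies. If you want a self-contained argument you must execute the support enumeration in full (including ruling out spurious maximal cells and checking irredundancy among the $P_{i,C,D}$) and make the radicality claim precise, for instance by exhibiting a squarefree initial ideal or by verifying the hypotheses of a specific cellular-radicality criterion.
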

In this case, the non-toric primes are not monomial, but consist of
monomials and binomials.  This fact is independent of the
field~$\Bbbk$.  The following is one example of the kind of
information that can be extracted from knowing the minimal primes of a
conditional independence ideal.
\begin{cor}
Let $X_1,X_2,X_3,X_4$ be finite random variables that
satisfy~$\operatorname{global}(C_4)$, and suppose that $X_1$ and $X_3$
are binary.  Then one (or more) of the following statements is true:
\begin{enumerate}
\item The joint distribution lies in the closure of the graphical model.
\item There is $i\in\{2,4\}$ and there are sets $E,F\subseteq[r_i]$
such that the following holds:
\begin{equation*}
\text{If }(X_1,X_3)=
\left\{
\begin{matrix}
(1,1)\\
(1,2)\\
(2,1)\\
(2,2)
\end{matrix}
\right\},
\text{ then }
\left\{
\begin{matrix}
X_i\in E \\
X_i\in F \\
X_i\notin F \\
X_i\notin E
\end{matrix}
\right\}.
\end{equation*}
\end{enumerate}
Conversely, any probability distribution that satisfies one of these
statements and that satisfies $\ind{2}{4}[\{1,3\}]$ also
satisfies~$\operatorname{global}(C_4)$.
\end{cor}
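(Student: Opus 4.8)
The plan is to deduce the corollary directly from the primary decomposition in the preceding proposition. That result says that, when $X_1$ and $X_3$ are binary, the ideal $I_{\operatorname{global}(C_4)}$ is radical and equals the intersection of its minimal primes, namely the toric ideal $I_{C_4}$ together with the ideals $P_{i,C,D}$ for $i\in\{2,4\}$ and $\emptyset\neq C,D\subsetneq[r_i]$. Consequently a probability distribution $p$ satisfies $\operatorname{global}(C_4)$ if and only if $p\in V(I_{C_4})\cup\bigcup_{i,C,D}V(P_{i,C,D})$, and it suffices to translate membership in each of these two kinds of sets into the two alternatives of the corollary.

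First consider the toric component. Since $I_{C_4}$ is the vanishing ideal of the $C_4$-graphical model (recorded here and established in Section~\ref{sec:vanishing}), $V(I_{C_4})$ is the Zariski closure of that model; by the standard description of the nonnegative real points of a toric variety, the set of probability distributions lying in $V(I_{C_4})$ is precisely the (Euclidean) closure of the strictly positive distributions of the model, which is alternative (1). Next consider a component $P_{i,C,D}$. Put $E:=[r_i]\setminus C$ and $F:=[r_i]\setminus D$. A distribution $p$ annihilating the four families of monomial generators of $P_{i,C,D}$ — namely $p_{1x_21x_4}$ for $x_i\in C$, $p_{1x_22x_4}$ for $x_i\in D$, $p_{2x_21x_4}$ for $x_i\notin D$, and $p_{2x_22x_4}$ for $x_i\notin C$ — is exactly a $p$ for which $(X_1,X_3)=(1,1)$ forces $X_i\in E$, $(X_1,X_3)=(1,2)$ forces $X_i\in F$, $(X_1,X_3)=(2,1)$ forces $X_i\notin F$, and $(X_1,X_3)=(2,2)$ forces $X_i\notin E$; this is alternative (2). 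This proves the forward implication. Note that the proposition only produces proper nonempty $C,D$, hence proper nonempty $E,F$, which is a special case of what (2) allows; the degenerate choices of $E,F$ are not needed here.

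For the converse I would argue directly. If $p$ satisfies (1), then $p$ lies in the closure of the graphical model; since each global Markov statement is a closed polynomial condition that holds on the model, it holds on its closure, so $p$ satisfies $\operatorname{global}(C_4)$. If $p$ satisfies (2) for some $i,E,F$ and also satisfies $\ind{2}{4}[\{1,3\}]$, then, because $\operatorname{global}(C_4)=\{\ind{1}{3}[\{2,4\}],\ind{2}{4}[\{1,3\}]\}$, it remains only to check $\ind{1}{3}[\{2,4\}]$, i.e. that for each fixed $(x_2,x_4)$ the $2\times2$ slice $(p_{x_1x_2x_3x_4})_{x_1,x_3\in\{1,2\}}$ has rank at most one. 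Writing $x_i$ for the relevant one of $x_2,x_4$ and distinguishing whether $x_i$ lies in $E\cap F$, $E\setminus F$, $F\setminus E$, or $[r_i]\setminus(E\cup F)$, the support conditions of (2) force, respectively, the row $x_1=2$, the column $x_3=2$, the column $x_3=1$, or the row $x_1=1$ of that slice to vanish; in every case the slice has rank at most one. Hence $\ind{1}{3}[\{2,4\}]$ holds and $p$ satisfies $\operatorname{global}(C_4)$.

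The only step that is not pure bookkeeping is the toric identification used for alternative (1): that the probability distributions in $V(I_{C_4})$ form exactly the Euclidean closure of the graphical model, and not some strictly larger Zariski-closed set. I would settle this by invoking the standard fact that the nonnegative part of a toric variety is the closure of its positive part. Everything else — matching the monomial generators of $P_{i,C,D}$ with the support pattern of (2), and the $2\times2$ rank count — is routine.
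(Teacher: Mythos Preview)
Your proposal is correct and follows exactly the route the paper intends: the corollary is stated there without proof, as an immediate translation of the primary decomposition in the preceding proposition into probabilistic language. You have carried out that translation carefully, including the substitution $E=[r_i]\setminus C$, $F=[r_i]\setminus D$ matching the monomial generators of $P_{i,C,D}$ to the support pattern in alternative~(2), and your direct rank-one verification for the converse is a clean way to handle that direction; the one nontrivial ingredient you flag---that the probability distributions in $V(I_{C_4})$ coincide with the Euclidean closure of the hierarchical model---is indeed the standard fact about nonnegative points of toric varieties that the paper takes for granted.
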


%%%%%%%%%%%%%%%%%%%%%%%%%%%%%%%%%%%%%%%%%%%%%%%%%%%%%
%%%%%%%%%%%%%%%%%%%%%%%%%%%%%%%%%%%%%%%%%%%%%%%%%%%%%
%%%%%%%%%%%%%%%%%%%%%%%%%%%%%%%%%%%%%%%%%%%%%%%%%%%%%

\section{The Vanishing Ideal of a Graphical Model}\label{sec:vanishing}

Graphical models can be represented via either parametric descriptions
(e.g.~factorizations of the density function) or implicit descriptions
(e.g.~Markov properties and conditional independence constraints).
One use of the algebraic perspective on graphical models is to find
the complete implicit description of the model, in particular, to find
the vanishing ideal of the model.  As described in
Definition~\ref{def:vanishing}, the vanishing ideal of a set $S$ is
the set of all polynomial functions that evaluate to zero at every
point in $S$.  Although some graphical models have complete
descriptions only in terms of conditional independence constraints,
understanding the vanishing ideal can be useful for more complex
models or hidden variable models where conditional independence is not
sufficient to describe the model, for instance, the mixed graph models
studied in Section~2 of Part~I.

\begin{example}
Consider the four cycle $C_4$ and let $X_1, X_2, X_3, X_4$ be binary
random variables.  The vanishing ideal
$I_{C_4} \subseteq \mathbb{R}[p_{i_1i_2i_3i_4}: i_1,i_2,i_3,i_4 \in
\{1,2\}]$ is generated by $16$ binomials, $8$ of which have degree $2$
and $8$ of which have degree $4$.  The degree $2$ binomials are all
implied by the two conditional independence statements
$\ind{1}{3}[\{2,4\}]$ and $\ind{2}{4}[\{1,3\}]$.  On the other hand,
the degree $4$ binomials are not implied by the conditional
independence constraints, even when we restrict to probability
distributions.  One example degree four polynomial is
\[
p_{1111}p_{1222}p_{2122}p_{2211} - p_{1122}p_{1211}p_{2111}p_{2222}
\]
and the others are obtained by applying the symmetry group of the
four cycle and permuting levels of the random variables.
\end{example}

Even in the simple example of the four cycle, there are generators of
the vanishing ideal that do not correspond to conditional independence
statements.  It seems an important problem to try to understand what
other types of equations can arise.
Theorem~3.2 in~\cite{Geiger2006} shows that the vanishing ideal of a
graphical model of discrete random variables is the toric
ideal~$I_{A}$, where $A$ is the design matrix of the graphical model,
defined in the end of Section~\ref{sec:alggeom}.  A classification for
discrete random variables and undirected graphs of when no further
polynomials are needed beyond conditional independence constraints is
obtained in the following theorem:

\begin{thm}[Theorem~4.4 in~\cite{Geiger2006}] \label{thm:Geiger}
Let $G$ be an undirected graph and let $\mathcal{M}_G$ be its graphical model  for
discrete random variables.  Then the vanishing ideal
$I(\mathcal{M}_G)$ is equal to the conditional independence ideal
$I_{\operatorname{global}(G)}$ if and only if $G$ is a chordal graph.
\end{thm}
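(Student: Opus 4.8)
The plan is to prove the two directions separately, and in both directions to keep the two ideals on an equal footing by always working with the associated \emph{varieties} (equivalently, since the vanishing ideal is radical by definition, it suffices to show equality of varieties, or to show the inclusion $I_{\operatorname{global}(G)} \supseteq I(\mathcal{M}_G)$ together with primality of $I_{\operatorname{global}(G)}$). I would first recall two standing facts: (a) $I(\mathcal{M}_G) \subseteq I_{\operatorname{global}(G)}$ always fails in the wrong direction — in fact it is $I_{\operatorname{global}(G)} \subseteq I(\mathcal{M}_G)$ that always holds, because by the global Markov property every distribution in $\mathcal{M}_G$ satisfies the CI statements, so every generator of $I_{\operatorname{global}(G)}$ vanishes on $\mathcal{M}_G$; (b) $V(I(\mathcal{M}_G)) = \overline{\mathcal{M}_G}$ is irreducible (it is the closure of a rationally parametrized set, so one of the theorems on irreducibility from Section~\ref{sec:alggeom} applies), hence $I(\mathcal{M}_G)$ is prime. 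Therefore the equality $I_{\operatorname{global}(G)} = I(\mathcal{M}_G)$ holds if and only if $I_{\operatorname{global}(G)}$ is itself prime and its variety already equals $\overline{\mathcal{M}_G}$; equivalently, iff $V(I_{\operatorname{global}(G)})$ is irreducible and equals $\overline{\mathcal{M}_G}$.

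For the ``if'' direction ($G$ chordal $\Rightarrow$ equality), I would argue by induction on the number of vertices using a perfect elimination ordering / clique-tree decomposition of the chordal graph. The base case (complete graph, or graph on one vertex) is trivial since then $\operatorname{global}(G)$ is empty and $\mathcal{M}_G$ is the full simplex, whose vanishing ideal is zero. For the inductive step, pick a simplicial vertex $v$ whose neighborhood $N(v)$ is a clique and splits $G$ as a clique-sum of $G' = G \setminus v$ with the clique $\{v\} \cup N(v)$; both pieces are chordal on fewer vertices. The key CI statement available from chordality is $\ind{v}{V \setminus (\{v\}\cup N(v))}[N(v)]$, and the corresponding determinantal/binomial generators, together with the (inductively known) generators for $G'$, should be shown to generate exactly the toric ideal $I_A$ of the clique-sum — this is the ``gluing'' step: the toric ideal of a clique-sum is generated by the toric ideals of the two pieces plus the CI ideal of the separating clique. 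Concretely I would verify that the parametrization $p_{i} = \prod_{\text{cliques } K} \theta^{(K)}_{i_K}$ has image whose closure is cut out set-theoretically (and then, using primality, ideal-theoretically) by these generators; decomposability of $G$ is exactly what makes the factorization across the separator clean. I expect the routine but slightly fussy part here to be checking that no new equations appear at the join — i.e. that the sum of the three ideals is already prime and of the right dimension, rather than having extra components.

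For the ``only if'' direction (not chordal $\Rightarrow$ strict inequality), I would exhibit an explicit extra generator of $I(\mathcal{M}_G)$ not in $I_{\operatorname{global}(G)}$, using a chordless cycle of length $\geq 4$, which exists by the definition of non-chordality. The $4$-cycle example already worked out in the excerpt (the degree-$4$ binomial $p_{1111}p_{1222}p_{2122}p_{2211} - p_{1122}p_{1211}p_{2111}p_{2222}$, which lies in $I_{C_4}$ but not in $I_{\operatorname{global}(C_4)}$) is the prototype; for a general chordless $k$-cycle one writes down the analogous ``alternating'' binomial of degree $k$ coming from the toric structure, checks it vanishes on $\mathcal{M}_G$ (it is a relation among the clique parameters of the cycle, hence in $I_A = I(\mathcal{M}_G)$ by Theorem~3.2 of~\cite{Geiger2006}), and checks it does \emph{not} vanish on $V(I_{\operatorname{global}(G)})$ — the cleanest way is to produce a point in $V(I_{\operatorname{global}(G)})$ (a distribution satisfying all global CI statements of $G$ but not in $\overline{\mathcal{M}_G}$) at which the binomial is nonzero, for instance using a distribution supported on the chordless cycle where neighboring variables are forced equal or unequal, analogous to Proposition~\ref{prop:binary-C4}. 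The main obstacle in this direction is the bookkeeping needed to reduce the general non-chordal case to a single chordless cycle and to confirm that the ``bad'' point and ``bad'' binomial survive the presence of the other variables; but conceptually it is just: non-chordality localizes to an induced chordless cycle, and on that cycle the CI ideal is strictly smaller than the toric ideal, as the worked example shows.
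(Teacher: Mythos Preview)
The paper does not contain a proof of this theorem; it is quoted verbatim as Theorem~4.4 of \cite{Geiger2006} and used only as a reference point for the discussion that follows. There is therefore no in-paper argument to compare your proposal against.

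On its own merits, your outline is reasonable and close in spirit to the original Geiger--Meek--Sturmfels argument. For the ``if'' direction, induction along a perfect elimination ordering together with a gluing lemma for the toric ideal across a clique separator is exactly the right mechanism; in modern language your ``gluing step'' is a toric fiber product, and the primality check you anticipate is indeed the technical heart. For the ``only if'' direction, localizing to an induced chordless cycle is the correct instinct, but the step you label ``bookkeeping'' is where the real work hides. The degree-$k$ binomial from the cycle does lift to $I(\mathcal{M}_G)$ (summing out the extra variables preserves membership in the toric ideal), but your proposed witness point in $V(I_{\operatorname{global}(G)})\setminus\overline{\mathcal{M}_G}$ is harder to justify: a distribution on the cycle variables satisfying $\operatorname{global}(C_k)$ must be extended to all of~$V$ while still satisfying every global Markov statement of~$G$, and the extra constraints could in principle kill your candidate. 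You would need to argue, for instance, that making the remaining variables independent of everything (or constant) preserves all the required CI statements; this is true but needs to be said. The original proof sidesteps this by a more algebraic route, showing directly that the quadratic generators of $I_{\operatorname{global}(G)}$ cannot generate the toric ideal once a chordless cycle is present, rather than by exhibiting a geometric witness.
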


It is unknown what the appropriate analogue of Theorem~\ref{thm:Geiger}
for other families of graphical models, either with different 
classes of graphs (e.g.~DAGs) or with other types of random variables (e.g.~Gaussian).
Computational studies of the vanishing ideals appear in many 
different papers:  for Bayesian networks with discrete random variables \cite{Garcia2005},
for Bayesian networks with Gaussian random variables \cite{Sullivant2008},
for undirected graphical models with Gaussian random variables \cite{Sturmfels2010}. 
A characterization for which graph families the vanishing
ideal is equal to the conditional independence ideal of global
Markov statements is lacking in all these cases.

One natural question is to determine the other generators of the
vanishing ideal that do not come from conditional independence, and to
give combinatorial structures in the underlying graphs that imply that
these more general constraints hold.  For instance, for mixed Gaussian
graphical models, conditional independence constraints are
determinantal, but not every determinantal constraint comes from a
conditional independence statement, and there is a characterization of
which determinantal constraints come from conditional
independence:

A mixed graph $G = ([m],B,D)$ is a triple where
$[m] = \{1,2, \ldots, m\}$ is the vertex set, $B$ is a set of
unordered pairs of $[m]$ representing bidirected edges in $G$, and $D$
is a set of ordered pairs of $[m]$ represented directed edges in~$G$.
There might also be both directed and bidirected edges between a pair
of vertices.  To the set of $B$ of bidirected edges one associates the
set of symmetric positive definite matrices
\[
PD(B)  =   \{  \Omega \in PD_m : \omega_{ij} = 0 \mbox{ if } i \neq j \mbox{ and }
 i \leftrightarrow j \notin B  \}.
\]
To the set of directed edges one associates the set of $m \times m$
matrices
\[
\mathbb{R}^D  =   \{  \Lambda \in \mathbb{R}^{m \times m} :  \lambda_{ij} = 0 \mbox{ if } 
i \to j  \notin D  \}.
\]
Let $\epsilon \sim \mathcal{N}(0, \Omega)$ and let $X$ be a jointly normal
random vector satisfying the structural equation system
\[
X = \Lambda^T X   +  \epsilon.
\]
\newcommand{\Id}{\mathrm{Id}} This is an example of a linear
structural equation model, and contains as special cases various
families of graphical models.  Let $\mathrm{Id}$ denote the
$m\times m$ identity matrix.  With these assumptions, if
$(\Id-\Lambda)$ is invertible,
\[
X \sim \mathcal{N}(0, \Sigma) \mbox{ where }  
\Sigma = (\mathrm{Id} - \Lambda)^{-T} \Omega( \mathrm{Id} - \Lambda)^{-1}.
\]

\begin{figure}
\centering
\resizebox{4cm}{!}{\includegraphics{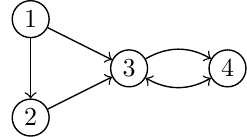}}
\caption{The mixed graph from Section~\ref{sec:vanishing}\label{fig:trek}}
\end{figure}
\begin{example}
Consider the mixed graph $G$ from Figure~\ref{fig:trek}.
In this case, $PD(B)$ is the set of positive definite
matrices of the form
\[
\Omega = \begin{pmatrix}
\omega_{11} & 0 & 0 & 0 \\
0 & \omega_{22} & 0 & 0 \\
0 & 0 & \omega_{33} & \omega_{34}  \\
0 & 0 & \omega_{34}  & \omega_{44}
\end{pmatrix}
\]
and $\mathbb{R}^D$ is the set of real matrices of the form
\[
\Lambda = 
\begin{pmatrix}
0 & \lambda_{12} & \lambda_{13} & 0 \\
0 & 0 & \lambda_{23} &  0 \\
0 & 0 & 0 & \lambda_{34} \\
0 & 0 & 0 & 0 
\end{pmatrix}.
\]
A positive definite matrix $\Sigma$ belongs to the graphical model associated
to this mixed graph, if and only if there are $\Omega \in PD(B)$ and $\Lambda \in \mathbb{R}^D$
such that $\Sigma = (\mathrm{Id} - \Lambda)^{-T}  \Omega (\mathrm{Id} - \Lambda)^{-1}$.
\end{example}

\begin{defn}
  Let $G = ([m], B,D)$ be a mixed graph.  A \emph{trek} between
  vertices $i$ and $j$ in $G$ consists of either
\begin{itemize}
\item a pair $(P_L, P_R)$ where $P_L$ is a directed path ending in $i$ and $P_R$ is a directed path 
  ending in $j$ where both $P_L$ and $P_R$ have the same source, or
\item a pair $(P_L, P_R)$ where $P_L$ is a directed path  ending in $i$ and $P_R$ is a directed path 
  ending in $j$ such that the source of $P_L$ and the source of $P_R$
  are connected by a bidirected edge.
\end{itemize}
Let $\mathcal{T}(i,j)$ denote the set of all treks in $G$ between $i$
and $j$.
\end{defn}

To each trek $T = (P_L, P_R)$ one associates the trek monomial $m_T$
which is the product with multiplicities of all $\lambda_{st}$ over
all directed edges appearing in $T$ times $\omega_{st}$ where $s$ and
$t$ are the sources of $P_L$ and $P_R$.  One reason for the interest
in treks is the \emph{trek rule}, which says the for the Gaussian
graphical model associated to $G$
\[
\sigma_{ij}  =  \sum_{T \in \mathcal{T}(i,j)}  m_T.
\]
For instance, for the mixed graph in Figure \ref{fig:trek}, the pair $(\{1 \to 2, 2 \to 3\}, \{1 \to 3\} )$
is a trek from $3$ to $3$.  
The corresponding trek monomial $m_T$ is $\omega_{11} \lambda_{12} \lambda_{23} \lambda_{13}$. 

\begin{defn}
Let $A, B, C_A, C_B$ be four sets of vertices of $G$, not necessarily disjoint. 
The pair of sets $(C_A, C_B)$ \emph{t-separates} (short for \emph{trek separates}) $A$ from $B$ if for every $a \in A$
and $b \in B$ and every
trek $(P_L, P_R) \in \mathcal{T}(a,b)$, either $P_L$ has a vertex in $C_A$ or
$P_R$ has a vertex in $C_B$, or both.
\end{defn}

\begin{thm}\cite{Sullivant2010}
Let $G = ([m], B,D)$ be a mixed graph and $A$ and $B$ two subsets
of $[m]$ with $|A| = |B| = k$.  Then the minor
$\det \Sigma_{A,B}$ belongs to the vanishing ideal $I_G$
if and only if there is a pair of sets $C_A, C_B$ such that $(C_A, C_B)$
t-separates $A$ and $B$ and such that $|C_A| + |C_B| < k$.
\end{thm}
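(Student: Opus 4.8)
The plan is to analyze $\det\Sigma_{A,B}$ directly via the trek rule. By the trek rule, each entry $\sigma_{ij}$ with $i\in A$, $j\in B$ is a sum of trek monomials over $\mathcal{T}(i,j)$, so by multilinearity of the determinant,
\[
\det\Sigma_{A,B} \;=\; \sum_{\pi}\operatorname{sgn}(\pi)\prod_{a\in A}\sigma_{a,\pi(a)}
\;=\;\sum_{\pi}\operatorname{sgn}(\pi)\sum_{(T_a)_{a\in A}}\;\prod_{a\in A} m_{T_a},
\]
where the inner sum ranges over all choices of a trek $T_a\in\mathcal{T}(a,\pi(a))$ for each $a$. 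Expanding fully, $\det\Sigma_{A,B}$ is an alternating sum over \emph{systems of treks} from $A$ to $B$: a bijection $\pi\colon A\to B$ together with a choice of trek $T_a\in\mathcal{T}(a,\pi(a))$ for each $a$. The key structural question is when this alternating sum vanishes identically as a polynomial in the $\lambda_{st}$ and $\omega_{st}$.

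First I would set up the combinatorial dictionary: a \emph{trek system} $\mathcal{S}=(T_a)_{a\in A}$ from $A$ to $B$ has a well-defined monomial $m_{\mathcal{S}}=\prod_a m_{T_a}$ and a sign inherited from $\pi$. Two trek systems contribute to the same monomial of $\det\Sigma_{A,B}$ exactly when they have the same multiset of edges; the coefficient of a given monomial is the signed count of trek systems realizing it. The heart of the argument is the Lindström--Gessel--Viennot-type lemma for treks (this is the technical core of \cite{Sullivant2010}): if some monomial is realized by a trek system whose left-paths and right-paths are not vertex-disjoint in the appropriate sense, then one can perform a path-swapping involution that pairs it with another trek system of the same monomial and opposite sign, so these cancel. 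Consequently, $\det\Sigma_{A,B}$ equals a signed sum over \emph{non-intersecting} trek systems — those where the left parts $P_L$ and the right parts $P_R$ form two families of vertex-disjoint paths meeting only at their sources. Such surviving systems, if any exist, contribute monomials that cannot cancel (one shows the surviving terms with distinct monomials do not collide, or more carefully, that the top-degree or a suitably chosen term survives), so $\det\Sigma_{A,B}\equiv 0$ if and only if there are \emph{no} non-intersecting trek systems from $A$ to $B$.

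The final step is to translate ``no non-intersecting trek system from $A$ to $B$'' into the t-separation condition via a Menger/max-flow--min-cut argument. A non-intersecting trek system from $A$ to $B$ is essentially a family of $k$ vertex-disjoint treks, one per element of $A$ matched to one of $B$; the existence of such a family is, by Menger's theorem applied to a suitable auxiliary graph (split each vertex into in- and out-copies, route the left halves and right halves, and encode bidirected edges at the sources), equivalent to the nonexistence of a vertex cut of size $<k$ separating $A$-sides from $B$-sides. Unwinding the construction, such a cut is precisely a pair $(C_A,C_B)$ with $|C_A|+|C_B|<k$ that t-separates $A$ from $B$: the $C_A$ part blocks all left-paths, the $C_B$ part blocks all right-paths, and every trek has its left or right path hit. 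Thus $\det\Sigma_{A,B}\in I_G$ iff such a pair $(C_A,C_B)$ exists, as claimed.

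The main obstacle is the cancellation lemma — making the sign-reversing involution on intersecting trek systems precise when treks can revisit vertices, can have a directed path meeting a bidirected source, and can even be treks from a vertex to itself; one must choose the involution canonically (e.g.\ swap tails at the first intersection under a fixed vertex ordering) and check it is well-defined, fixed-point-free on intersecting systems, and monomial-preserving and sign-reversing. Equally delicate is verifying that the surviving non-intersecting systems genuinely do \emph{not} all cancel among themselves — this requires showing that distinct non-intersecting systems either yield distinct monomials or can be grouped so that a leading term persists, which is where the positive-definiteness of $\Omega$ (hence genericity of the parameters) and careful bookkeeping of the $\omega$-factors at sources enter. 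The Menger-theorem translation is then comparatively routine graph theory, modulo correctly handling the bidirected edges in the vertex-splitting construction.
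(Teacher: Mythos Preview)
The paper does not actually prove this theorem; it is stated with attribution to \cite{Sullivant2010} and followed immediately by Example~5.3, with no proof given. So there is nothing in the present paper to compare your attempt against.

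That said, your outline is essentially the strategy of the cited source (Sullivant--Talaska--Draisma), with one difference worth noting. For the ``if'' direction (t-separation implies the minor vanishes), the original argument is slicker than the global LGV cancellation you sketch: a t-separating pair $(C_A,C_B)$ with $|C_A|+|C_B|<k$ yields a factorization of $\Sigma_{A,B}$ as a product of matrices whose inner dimensions total $|C_A|+|C_B|$, so $\operatorname{rank}\Sigma_{A,B}<k$ and the minor vanishes for purely linear-algebraic reasons. The Lindstr\"om--Gessel--Viennot machinery and the tail-swapping involution are only needed for the ``only if'' direction, to exhibit a nonzero term when $k$ vertex-disjoint treks exist. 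Your Menger/max-flow--min-cut translation between ``no non-intersecting trek system'' and ``small t-separator'' is exactly the right bridge, and the auxiliary-graph construction you describe (vertex splitting, handling bidirected edges at sources) is what the cited paper does.
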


The t-separation criterion can produce implicit constraints for
structural equation models in situations where there are no
conditional independence constraints.

\begin{example}
\label{ex:trek}
Consider the mixed graph $G$ from Figure~\ref{fig:trek}.  The
vanishing ideal of the model is
$I_G = \langle \det \Sigma_{\{1,2\}, \{3,4\}} \rangle$.  This
determinantal constraint is not a conditional independence constraint.
It is implied by the t-separation criterion because the pair
$( \emptyset, \{3\})$ t-separates $\{1,2\}$ and $\{3,4\}$.
\end{example}

\begin{remark}
In the case where there are hidden random variables, the vanishing
ideal is typically not sufficient to completely describe the set of
probability distributions that come from the graphical model.  Usually
one also needs to consider inequality constraints and other
semialgebraic conditions.  This problem is discussed in more detail in
\cite{Allman2014, Allman2015, Zwiernik2016}, among others.
\end{remark}

%%%%%%%%%%%%%%%%%%%%%%%%%%%%%%%%%%%%%%%%%%%%%%%%%%%%%%%%%%%
%%%%%%%%%%%%%%%%%%%%%%%%%%%%%%%%%%%%%%%%%%%%%%%%%%%%%%%%%%%
%%%%%%%%%%%%%%%%%%%%%%%%%%%%%%%%%%%%%%%%%%%%%%%%%%%%%%%%%%%
%%%%%%%%%%%%%%%%%%%%%%%%%%%%%%%%%%%%%%%%%%%%%%%%%%%%%%%%%%%

\section{Further Reading}\label{sec:further}

Diaconis, Eisenbud and Sturmfels \cite{Diaconis1998} were the first to
consider primary decompositions for statistical applications, in
particular the analysis of the connectivity of certain random walks.
This perspective was also picked up in \cite{Kahle2014b} using
conditional independence ideals.  Primary decomposition of conditional
independence ideals also makes an appearance in the following papers
not already mentioned \cite{Drton2010, Fink2015, Kirkup2007,
Sullivant2009,Swanson2013}.

The algebraic view on undirected graphical models was presented
in~\cite{Geiger2006}, which began extensive study of the vanishing
ideals of undirected graphical models for discrete random variables.
Focus has been on developing techniques for constructing generating sets
of the vanishing ideals with \cite{Engstrom2014, Hosten2002, Rauh2016}
being representative papers in this area.  Vanishing ideals of
undirected models with Gaussian random variables and models for DAGs
have not been much studied.  Some papers that initiated their study
include \cite{Garcia2005,Sturmfels2010, Sullivant2008}.

\bibliographystyle{amsplain}
\bibliography{math}

\end{document}